\documentclass[12pt]{article}
\usepackage[margin=1in]{geometry}
\usepackage{amsthm, amsmath,amsfonts,amssymb,euscript,hyperref,graphics,color,slashed,mathrsfs}
\usepackage{graphicx}
\usepackage{comment}
\usepackage{import}
\usepackage{tikz}
\usepackage{latexsym}
\usepackage{mathtools}
\usepackage{appendix}
\usepackage{stmaryrd}
\usetikzlibrary{arrows}
\usepackage{pgfplots}
\usepackage{extarrows}



\newtheorem{theorem}{Theorem}[section]
\newtheorem*{theorem*}{Theorem}
\newtheorem{lemma}[theorem]{Lemma}

\newtheorem{definition}[theorem]{Definition}
\newtheorem{remark}[theorem]{Remark}

\setlength{\textwidth}{16cm} \setlength{\oddsidemargin}{0cm}
\setlength{\evensidemargin}{0cm}

\numberwithin{equation}{section}

\begin{document}
\title {A note on the number of irrational odd zeta values, \uppercase\expandafter{\romannumeral2}}

\author{Li Lai}
\date{}

\maketitle

\begin{abstract}
We prove that there are at least $1.284 \cdot \sqrt{s/\log s}$ irrational numbers among $\zeta(3)$, $\zeta(5)$, $\zeta(7)$, $\ldots$, $\zeta(s-1)$ for any sufficiently large even integer $s$. This result improves upon the previous finding by a constant factor. The proof combines the elimination technique of Fischler-Sprang-Zudilin (2019) with the $\Phi_n$ factor method of Zudilin (2001). 
\end{abstract}

\section{Introduction}

Let $\zeta(s) := \sum_{m=1}^{+\infty} m^{-s}$~(for $\operatorname{Re}(s) > 1$) be the Riemann zeta function. We are interested in the special values $\zeta(s)$ for odd integers $s>1$ (referred to as
\emph{odd zeta values} for brevity). The aim of this note is to prove the following:

\begin{theorem}\label{main_thm}
For any sufficiently large positive integer $s$, we have
\[  \# \left\{  \text{odd~} i \in [3,s]  \mid  \zeta(i) \not\in \mathbb{Q} \right\}  \geqslant 1.284579 \cdot \sqrt{\frac{s}{\log s}}.  \]
\end{theorem}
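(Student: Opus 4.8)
\emph{Overview and construction.} The proof follows the standard paradigm for results of this kind, pushed quantitatively. For each large $s$ one builds an infinite sequence of $\mathbb Q$-linear forms $L_n=a_{0,n}+\sum_{i\ \mathrm{odd},\,3\leqslant i\leqslant s-1}a_{i,n}\zeta(i)$ ($n\to\infty$) that are nonzero but extremely small, with controlled coefficients and denominators, and then applies the Fischler-Sprang-Zudilin elimination procedure, which turns the existence of such forms into a lower bound for the number of irrationals among $\zeta(3),\dots,\zeta(s-1)$. I would start from a well-poised hypergeometric kernel $R_n(t)$ — a ratio of Pochhammer symbols in $t$, invariant under $t\mapsto -n-t$, depending on a degree profile tied to $s$ and on an auxiliary parameter $r$ — so that $\sum_{t\geqslant1}R_n(t)$, or a fixed derivative of it, is such a form. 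Three asymptotic inputs are needed as $n\to\infty$ with $s$ fixed: (i) a saddle-point estimate $\tfrac1n\log|L_n|\to-C_1(s)$ with $C_1(s)\sim c_1\,s\log s$; (ii) a height bound $\tfrac1n\log\max_i|a_{i,n}|\to C_2(s)$ with $C_2(s)\sim c_2\,s$; (iii) a denominator estimate $D_n\,a_{i,n}\in\mathbb Z$ with $D_n=\operatorname{lcm}(1,\dots,n)^{\kappa}/\Phi_n$, $\kappa$ an explicit integer $\asymp s$, $\Phi_n$ the Zudilin factor, and $\tfrac1n\log D_n\to C_3(s)=c_3\,s-\delta(s)+o(s)$. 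One must also verify $L_n\neq0$ for all large $n$, via the $p$-adic valuation at a well-chosen prime or by identifying $L_n$ with a nonvanishing very-well-poised hypergeometric value.

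\emph{The $\Phi_n$ saving.} The decisive new ingredient is a $p$-adic analysis showing that, for primes $p$ with $n/p$ in certain subintervals of $[0,1]$ determined by $r$ and the degree profile, $p$ divides every coefficient $a_{i,n}$ to a common computable order; the product of these prime powers is a factor $\Phi_n$ with $\tfrac1n\log\Phi_n\to\delta(s)$, where in the limit $\delta(s)$ is the optimum over admissible $r$ of a Chebyshev-type prime-counting integral and is $\asymp s$. Replacing the naive denominator rate $c_3\,s$ by the smaller $c_3\,s-\delta(s)$ is exactly what enlarges the final constant relative to earlier work.

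\emph{Elimination and the count.} Assume for contradiction that the number $N$ of irrational values among $\zeta(3),\dots,\zeta(s-1)$ is below $1.284579\sqrt{s/\log s}$, and let $V$ be the corresponding index set. Writing each rational $\zeta(i)$ ($i\notin V$) over a common, $s$-dependent but $n$-independent denominator, multiply $L_n$ by a suitable $n$-independent integer to obtain a linear form $\widetilde L_n$ in $1$ and $\{\zeta(i):i\in V\}$ with integer coefficients, still of size $e^{-C_1(s)\,n\,(1+o(1))}$ and height $e^{(C_2(s)+C_3(s))\,n\,(1+o(1))}$. Now invoke the FSZ elimination: suitable $\mathbb Z$-linear combinations of $\widetilde L_n,\widetilde L_{n+1},\dots$ chosen to annihilate the coefficients of the $N$ surviving zeta values produce a sequence of rationals of controlled denominator that are nonzero — here the rank/non-vanishing properties of the coefficient matrices enter — yet, once $N$ is below the stated threshold, of absolute value $<1$ after clearing denominators, which is impossible. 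Turning the resulting inequality between $C_1(s)$ and the elimination cost, expressed through $C_2,C_3,\delta$, into a bound on $N$ and letting $s\to\infty$ gives $N\geqslant(c+o(1))\sqrt{s/\log s}$ with $c$ an explicit function of $r$ and the degree profile; a genuine multivariable numerical optimisation, with the $\Phi_n$ term $\delta(s)$ included, yields $c=1.284579$, which is Theorem~\ref{main_thm}.

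\emph{Main obstacle.} I expect the real difficulty to lie not in any single asymptotic but in the combined bookkeeping: one must carry the $\Phi_n$ saving $\delta(s)$ — itself a constrained optimum of a prime-counting integral — coherently through the height-versus-smallness balance inside the elimination step, and then certify the displayed digits of $1.284579$ by an honest, if elementary, numerical optimisation over $r$ and the degree profile. A secondary point needing care is establishing the rank hypotheses that make the integers produced by the elimination genuinely nonzero, which hinges on the explicit shape of $R_n$.
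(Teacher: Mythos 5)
The overall framework you sketch --- a well-poised rational function $R_n(t)$, the Zudilin arithmetic factor $\Phi_n$ removing a common prime power from the coefficients, a saddle-point estimate for the smallness of the forms, and a final numerical optimisation over the profile parameters and $r$ --- correctly identifies the ingredients the paper uses. But your description of the elimination step is wrong, and that is precisely the step that makes the count $\sqrt{s/\log s}$ rather than $\log s$. You propose to annihilate the $N$ unwanted zeta values by taking $\mathbb{Z}$-linear combinations of $\widetilde L_n, \widetilde L_{n+1}, \ldots$ over different $n$. This is not the Fischler--Sprang--Zudilin mechanism, and it would not work as stated: the coefficient vectors of $\widetilde L_n$ for varying $n$ satisfy no algebraic identity that would let you bound the entries of the eliminating combination, so both the denominator and the size of the resulting rational escape control, and non-vanishing is unclear.

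The actual FSZ elimination works at a single $n$ by exploiting the twist structure. One builds a whole family of forms $S_{n,\theta}=\sum_{m\geqslant 0}R_n(m+\theta)$ in $1$ and Hurwitz zeta values $\zeta(i,\theta)$, with $\theta=k/b$ running over the set $\mathcal{Z}_B$ of fractions whose denominators $b$ satisfy $\varphi(b)\leqslant B$. Two structural facts then carry the whole argument: the coefficient $\rho_{n,i}$ of $\zeta(i,\theta)$ for $i\geqslant 3$ is the same for every $\theta$, and the identity $\sum_{k=1}^{b}\zeta(i,k/b)=b^{i}\zeta(i)$ converts $\sum_{k=1}^{b}S_{n,k/b}$ into a form in $1$ and ordinary odd zeta values whose coefficient of $\zeta(i)$ equals $b^{i}\rho_{n,i}$. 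This generalized-Vandermonde structure in $b$ lets one pick \emph{$n$-independent} integers $w_b$ so that $\widetilde S_n=\sum_{b\in\Psi_B}w_b\sum_{k=1}^{b}S_{n,k/b}$ kills the coefficients of $\zeta(i)$ for $i$ in any prescribed index set $I$ with $|I|=|\Psi_B|-2$, while keeping $\widetilde S_n\sim(\sum_b w_b b)\,S_{n,1}\neq 0$ with the same denominator bound as each $S_{n,\theta}$. The count one proves is therefore $|\Psi_B|-1$, and the $\sqrt{s/\log s}$ scaling comes from $|\Psi_B|\sim (\zeta(2)\zeta(3)/\zeta(6))B$ together with the constraint $B\asymp\sqrt{s/\log s}$ forced by the normalisation factor $\log A_1(B)\asymp B^2\log B$. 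Because your proposal never mentions Hurwitz zeta values, the set $\mathcal{Z}_B$, or the $\theta$-independence of $\rho_{n,i}$, the elimination as you have written it has no mechanism to produce the stated bound.
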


This is a small progress on the irrationality of odd zeta values. Let us briefly review a few previously known results on this topic. In 1979, Ap\'ery \cite{Ape1979} proved that $\zeta(3)$ is irrational. The next significant development was made by Rivoal \cite{Riv2000} in 2000 (see also Ball-Rivoal \cite{BR2001}), who showed that the dimension of the $\mathbb{Q}$-linear span of $1,\zeta(3),\zeta(5),\ldots,\zeta(s)$ is at least $(1-o(1))(\log s) / (1+\log 2)$, as the odd integer $s \to +\infty$. So far, the irrationality of $\zeta(5)$ remains open, although Zudilin \cite{Zud2001} in 2001 proved that at least one of $\zeta(5),\zeta(7),\zeta(9),\zeta(11)$ is irrational. A key ingredient of Zudilin's theorem is the consideration of certain arithmetic factors, usually denoted by $\Phi_n$. We refer to it as ``$\Phi_n$ factor method'' in this note.

During 2018--2020, after a series of developments in \cite{Zud2018,Spr2018+,FSZ2019,LY2020}, it was proved that the number of irrationals among $\zeta(3),\zeta(5),\ldots,\zeta(s)$ is at least $1.192507 \sqrt{s/\log s}$ for any sufficiently large odd integer $s$ (see \cite[Thm. 1.1]{LY2020}). A key ingredient of this result is an elimination technique, whose power was first revealed by Fischler-Sprang-Zudilin in \cite{FSZ2019}. Roughly speaking, the elimination technique of Fischler-Sprang-Zudilin is useful to remove some unwanted terms in the linear combinations of $1$ and odd zeta values. 

Recently, the author \cite{Lai2024+} incorporated Zudilin's $\Phi_n$ factor method into the original proof of the Ball-Rivoal theorem \cite{BR2001} and improved the constant $1/(1+\log 2)$ to $1.108/(1+\log 2)$. But this is subsumed in a result of Fischler \cite{Fis2021+} in 2021+, where it was proved that the dimension of the $\mathbb{Q}$-linear span of $1,\zeta(3),\zeta(5),\ldots,\zeta(s)$ is at least $0.21 \sqrt{s/\log s}$ for any sufficiently large odd integer $s$. In this note, we combine the elimination technique of Fischler-Sprang-Zudilin with the $\Phi_n$ factor method of Zudilin to prove Theorem \ref{main_thm}. 

\bigskip

The structure of this note is as follows. In \S \ref{sec_2} we construct rational functions and linear forms. In \S \ref{sec_3} we study the arithmetic properties of the linear forms. In \S \ref{sec_4} we estimate the linear forms. In \S \ref{sec_5}, we use the elimination technique to transform our task into a computational problem. Finally, in \S \ref{sec_6}, we present the computational results and provide a proof for Theorem \ref{main_thm}.

\section{Rational functions and linear forms}\label{sec_2}

\subsection{Parameters}

We fix a positive integer $M$ and a finite collection of non-negative integers $\{ \delta_{j} \}_{j=1}^{J}$ such that
\[ 0 \leqslant \delta_1 \leqslant \delta_2 \leqslant \cdots \leqslant \delta_J < \frac{M}{2}. \]
Here $J \geqslant 1$ is the number of $\delta_j$'s.

Fix a positive rational number $r$. Denote by $\operatorname{den}(r)$ the denominator of $r$ in its reduced form. Eventually, we will take $r$ to be sufficiently close to a real number $r_0$ determined by the parameters $M,\delta_1,\ldots,\delta_J$.

Let $B$ be a positive real number. Let $s$ be an integral multiple of $2J$. In the sequel, we always assume that: 
\begin{align}
\text{Both $s$ and $B$ are sufficiently large, and~} s \geqslant 10(2r+1)MB^2. \label{cond_s_B}
\end{align}
Eventually, we will take $B = c\sqrt{s/\log s}$ for some constant $c>0$ depending only on the parameters $M,\delta_1,\ldots,\delta_J$. Thus, the condition $s \geqslant 10(2r+1)MB^2$ is automatically satisfied when $s$ is sufficiently large.

\subsection{Normalization factors}

As usual, we denote the Euler totient function by $\varphi(\cdot)$. 

\begin{definition}
We define the following two sets depending only on $B$:
\begin{align*}
	\Psi_B &:= \left\{ b \in \mathbb{N} \mid \varphi(b) \leqslant B \right\}, \\
	\mathcal{Z}_B &:= \left\{  \frac{a}{b} \in \mathbb{Q} \cap(0,1] \mid b \in \Psi_B, 1 \leqslant a \leqslant b \text{~and~} \gcd(a,b)=1  \right\}.
\end{align*}
Moreover, we define the following two numbers $A_1(B)$ and $A_2(B)$ depending only on $M$, $r$ and $B$:
\begin{align*}
	A_1(B) &:= \prod_{b \in \Psi_B} b^{(2r+1)M\varphi(b)}, \\
	A_2(B) &:= \prod_{b \in \Psi_B} \prod_{ p \mid b \atop p \text{~prime}} p^{(2r+1)M\varphi(b)/(p-1)}.
\end{align*}
\end{definition}

\medskip

By \cite[Prop. 2.2]{LY2020}, we have the following properties for the sets $\Psi_B$ and $\mathcal{Z}_B$:
\begin{align}
	& \left|\Psi_B\right|=\left(\frac{\zeta(2) \zeta(3)}{\zeta(6)}+o_{B \rightarrow+\infty}(1)\right) B, \label{est_Psi_B}\\
	& \left\{ \frac{1}{b}, \frac{2}{b}, \ldots, \frac{b}{b} \right\} \subset \mathcal{Z}_{B} \text{~for any~} b \in \Psi_B, \label{prop_Psi_B} \\
	& |\mathcal{Z}_B| \leqslant B^2 \text{~for any sufficiently large~} B. \label{est_Z_B}
\end{align}

By \cite[Lem. 2.4]{LY2020}, we have the following estimates for $A_1(B)$ and $A_2(B)$:
\begin{align}
	& A_1(B)=\exp \left(\left(\frac{1}{2} \frac{\zeta(2) \zeta(3)}{\zeta(6)}+o_{B \rightarrow+\infty}(1)\right)(2r+1)M B^2 \log B\right), \label{est_A_1}\\
	& A_2(B) \leqslant \exp \left(10(2 r+1)M B^2(\log \log B)^2\right) ~\text{for any sufficiently large~} B. \label{est_A_2}
\end{align}

\subsection{Rational functions}

Define the integer 
\begin{equation*}
	P_{B,\operatorname{den}(r)} := 2\operatorname{den}(r) \cdot \operatorname{LCM}_{b \in \Psi_B \atop \text{prime~} p \mid b} \left\{ p-1 \right\}, 
\end{equation*}
where $\operatorname{LCM}$ means taking the least common multiple. Note that for any $n \in P_{B,\operatorname{den}(r)}\mathbb{N}$, we have $A_1(B)^n \in \mathbb{N}$ and $A_2(B)^n \in \mathbb{N}$. For any positive integer $k$, we denote by 
\[ (x)_k := x(x+1)\cdots(x+k-1) \]
the Pochhammer symbol.

\begin{definition}\label{def_R_n}
For any $n \in P_{B,\operatorname{den}(r)}\mathbb{N}$, we define the rational function $R_n(t) \in \mathbb{Q}(t)$:
\begin{align*}
	R_n(t) :=  &A_1(B)^n A_2(B)^n \cdot \frac{\prod_{j=1}^{J} \left( (M-2\delta_j)n \right)!^{s/J} }{\left( n/\operatorname{den}(r) \right)!^{\operatorname{den}(r)(2r+1)M|\mathcal{Z}_B| - \operatorname{den}(r)M }} \\
	&\times (2t+Mn) \cdot \frac{(t-rMn)_{rMn} (t+Mn+1)_{rMn} \prod_{\theta \in \mathcal{Z}_B \setminus \{ 1 \}} (t-rMn+\theta)_{(2r+1)Mn} }{\prod_{j=1}^{J} (t+\delta_j n)_{(M-2\delta_j)n + 1}^{s/J} }.
\end{align*}
\end{definition}

\medskip

Note that the degree of the denominator of $R_n(t)$ is 
\[ \frac{s}{J} \sum_{j=1}^{J} \left( (M-2\delta_j)n + 1\right) \geqslant \frac{s}{J} \sum_{j=1}^{J} (n+1) = (n+1)s.  \]
Meanwhile, the degree of the numerator of $R_n(t)$ is 
\[ 1 + ((2r+1)M|\mathcal{Z}_B| - M)n \leqslant (2r+1)M B^2 n \]
by \eqref{est_Z_B}. Under our assumption \eqref{cond_s_B}, we conclude that
\[  \deg R_n(t) \leqslant -2 \]
for any $n \in P_{B,\operatorname{den}(r)}\mathbb{N}$. Thus, the rational function $R_n(t)$ has the unique partial-fraction decomposition of the following form.

\begin{definition}
For any $n \in P_{B,\operatorname{den}(r)}\mathbb{N}$, we define $a_{n,i,k} \in \mathbb{Q}$ ($1 \leqslant i \leqslant s$, $\delta_1 n \leqslant k \leqslant (M-\delta_1)n$) as the coefficients appearing in the partial-fraction decomposition of $R_n(t)$:
\begin{equation}\label{def_a_i_k}
	R_n(t) =: \sum_{i=1}^{s} \sum_{k= \delta_1n}^{(M-\delta_1)n} \frac{a_{n,i,k}}{(t+k)^i}.
\end{equation}
\end{definition}

\subsection{Linear forms}

\begin{definition}
For any $n \in P_{B,\operatorname{den}(r)}\mathbb{N}$, for any $\theta \in \mathcal{Z}_B$, we define 
\begin{equation}\label{def_S_n_theta}
	S_{n,\theta} := \sum_{m=0}^{+\infty} R_n(m+\theta).
\end{equation}
\end{definition}

It is easy to check that the set $\mathcal{Z}_{B} \setminus \{ 1 \}$ is invariant under the transform $x \mapsto 1-x$. Since $s$ is a multiple of $2J$ and $n$ is a multiple of $P_{B,\operatorname{den}(r)}$, all of $s/J$, $rMn$ and $(2r+1)Mn$ are even integers. Thus, the rational function $R_n(t)$ has the symmetry 
\[ R_n(-t-Mn) = - R_n(t). \] 
Then, using similar arguments as in \cite[Lem. 1]{FSZ2019}, we can express $S_{n,\theta}$ as a linear form in $1$ and Hurwitz zeta values. Recall the Hurwitz zeta values are defined by
\[ \zeta(i,\alpha) := \sum_{m=0}^{+\infty} \frac{1}{(m+\alpha)^i} \]
for any $i \in \mathbb{Z}_{\geqslant 2}$ and $\alpha > 0$.

\begin{lemma}\label{lem_lin}
For any $n \in P_{B,\operatorname{den}(r)}\mathbb{N}$ and any $\theta \in \mathcal{Z}_B$, we have
\[ S_{n,\theta} = \rho_{n,0,\theta} + \sum_{3 \leqslant i \leqslant s-1 \atop i~\text{odd} } \rho_{n,i} \zeta(i,\theta), \]
where the rational coefficients are given by
\begin{align}
\rho_{n,i} &= \sum_{k=\delta_1 n}^{(M-\delta_1)n} a_{n,i,k} \quad (3 \leqslant i \leqslant s-1,~i~\text{odd}), \label{def_rho_i} \\
\rho_{n,0,\theta} &= - \sum_{k=\delta_1 n}^{(M-\delta_1)n}\sum_{\ell= 0}^{k-1}\sum_{i=1}^{s} \frac{a_{n,i,k}}{(\ell+\theta)^i}. \label{def_rho_0_theta}
\end{align}
\end{lemma}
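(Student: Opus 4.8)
The plan is to start from the partial-fraction decomposition \eqref{def_a_i_k} and substitute $t = m+\theta$, then sum over $m \geqslant 0$. Plugging \eqref{def_a_i_k} into \eqref{def_S_n_theta} gives, at least formally,
\[
S_{n,\theta} = \sum_{m=0}^{+\infty} \sum_{i=1}^{s} \sum_{k=\delta_1 n}^{(M-\delta_1)n} \frac{a_{n,i,k}}{(m+k+\theta)^i}.
\]
Since $\deg R_n \leqslant -2$, the double series converges absolutely, so we may rearrange freely; the $i=1$ contribution needs separate care because $\sum_m 1/(m+k+\theta)$ diverges termwise, but these divergences must cancel in the total sum because $S_{n,\theta}$ itself converges — this is exactly where the symmetry $R_n(-t-Mn) = -R_n(t)$ will be used, following \cite[Lem. 1]{FSZ2019}. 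First I would fix $i$ and $k$ and shift the summation index, writing $\sum_{m=0}^{+\infty}(m+k+\theta)^{-i} = \zeta(i,\theta) - \sum_{\ell=0}^{k-1}(\ell+\theta)^{-i}$ for $i \geqslant 2$, which peels off a finite ``tail'' that will become part of $\rho_{n,0,\theta}$.

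Next I would collect terms. For each odd $i$ with $3 \leqslant i \leqslant s-1$, summing the $\zeta(i,\theta)$-contributions over $k$ produces the coefficient $\sum_k a_{n,i,k}$, which is precisely $\rho_{n,i}$ as defined in \eqref{def_rho_i}; I must also check that the even $i$ and the $i = s$ terms do not contribute a $\zeta(i,\theta)$, and that no $i=1$ term survives as a $\zeta(1,\theta)$. The vanishing of the even-$i$ and $i=s$ zeta coefficients, together with the cancellation of the $i=1$ divergence, is the content of the symmetry argument: pairing the summand at $t = m+\theta$ with the one at $t = -(m+\theta)-Mn$ and using $R_n(-t-Mn) = -R_n(t)$ forces $\sum_k a_{n,i,k} = 0$ for even $i$ (and for $i=1$ the relevant combination telescopes), exactly as in \cite[Lem. 1]{FSZ2019}. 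I should note here that the hypotheses guaranteeing the symmetry — $s/J$, $rMn$ and $(2r+1)Mn$ all even, and $\mathcal{Z}_B \setminus \{1\}$ stable under $x \mapsto 1-x$ — have already been verified in the text preceding the lemma, so I may invoke them directly.

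Finally I would assemble the constant term: the leftover finite tails $-\sum_{\ell=0}^{k-1}(\ell+\theta)^{-i}$ for $i \geqslant 2$, together with whatever the $i=1$ terms contribute after their divergences cancel, must sum to $\rho_{n,0,\theta}$; comparing with \eqref{def_rho_0_theta} I see the claimed formula is $-\sum_k \sum_{\ell=0}^{k-1} \sum_{i=1}^{s} a_{n,i,k}(\ell+\theta)^{-i}$, i.e.\ it includes the $i=1$ terms as well, so the bookkeeping is that the $i=1$ divergences cancel among themselves and only these finite pieces remain. The main obstacle is precisely this careful handling of the conditionally-convergent $i=1$ part: one cannot naively split the series, and the cleanest route is to mimic \cite[Lem. 1]{FSZ2019} by first using the symmetry to rewrite $2 S_{n,\theta}$ as a sum of $R_n(m+\theta)$ over all $m \in \mathbb{Z}$ (interpreted symmetrically), where the degree-$\leqslant -2$ bound makes everything absolutely convergent, and only then decomposing into partial fractions. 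The remaining steps — index shifts, collecting coefficients, and matching against \eqref{def_rho_i}–\eqref{def_rho_0_theta} — are routine.
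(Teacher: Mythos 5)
Your proposal is essentially the correct reconstruction of the argument the paper delegates to \cite[Lem.~1]{FSZ2019}: expand $R_n$ into partial fractions, peel off $\zeta(i,\theta)-\sum_{\ell<k}(\ell+\theta)^{-i}$ for each $i,k$, and then use the symmetry of the coefficients to restrict the surviving zeta contributions to odd $i$ in $[3,s-1]$. Two imprecisions are worth flagging, though neither is fatal. First, in your opening paragraph you attribute the cancellation of the $i=1$ divergences to the symmetry $R_n(-t-Mn)=-R_n(t)$; that is not right. The convergence of the $i=1$ block is forced purely by $\deg R_n\leqslant-2$, which gives $\rho_{n,1}=\sum_k a_{n,1,k}=0$ (vanishing residue at infinity) and hence the telescoping you invoke later. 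The symmetry, translated through partial fractions, yields $a_{n,i,Mn-k}=(-1)^{i+1}a_{n,i,k}$, and its role is precisely to kill $\rho_{n,i}=\sum_k a_{n,i,k}$ for \emph{even} $i\geqslant 2$ (in particular $i=s$, since $s$ is even). You do separate these two mechanisms correctly in your middle paragraph, so this is a slip of emphasis rather than a gap. Second, the alternative route you float at the end, rewriting $2S_{n,\theta}$ as a symmetric sum of $R_n(m+\theta)$ over all $m\in\mathbb{Z}$, does not work as stated for general $\theta\in\mathcal{Z}_B$: the involution $t\mapsto -t-Mn$ sends $m+\theta$ to $-(m+Mn+1)+(1-\theta)$, so it relates the series at abscissa $\theta$ to the one at $1-\theta$, not back to itself (this is in fact why the invariance of $\mathcal{Z}_B\setminus\{1\}$ under $x\mapsto 1-x$ matters). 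The cleaner, and standard, route is the coefficient-level argument you already set out, so you should drop the ``sum over $\mathbb{Z}$'' variant rather than lean on it.
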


\medskip

Note that the coefficients $\rho_{n,i}$ ($3 \leqslant i \leqslant s-1$, $i$ odd) do not depend on $\theta \in \mathcal{Z}_B$.

\section{Arithmetic properties}\label{sec_3}

As usual, we denote by
\[ D_m = \operatorname{LCM}\{1,2,3,\ldots,m\} \]
the least commom multiple of the numbers $1,2,3\ldots,m$ for any positive integer $m$. We denote by $f^{(\lambda)}(t)$ the $\lambda$-th order derivative of a function $f(t)$ for any non-negative integer $\lambda$. For a prime number $p$, we denote by $v_p(x)$ the $p$-adic order of a rational number $x$, with the convention $v_p(0)=+\infty$.

We first state two basic lemmas. 

\begin{lemma}\label{lem_G}
Let $a,b,a_0,b_0$ be integers such that $a_0 \leqslant a \leqslant b \leqslant b_0$ and $b_0 > a_0$. Consider the rational function
\[ G(t) = \frac{(b-a)!}{(t+a)_{b-a+1}}. \]
Then, we have
	\begin{equation}\label{lem_G_1}
		D_{b_0-a_0}^{\lambda} \cdot \frac{1}{\lambda!} \left( G(t)(t+k) \right)^{(\lambda)} \big|_{t=-k} \in \mathbb{Z} 
	\end{equation}
for any integer $k \in [a_0,b_0]\cap \mathbb{Z}$ and any non-negative integer $\lambda$. 
	
Moreover, for any prime number $p > \sqrt{b_0-a_0}$, any integer $k \in [a_0,b_0]\cap \mathbb{Z}$ and any non-negative integer $\lambda$, we have
\begin{equation}\label{lem_G_2}
	v_p\left( \left( G(t)(t+k) \right)^{(\lambda)} \big|_{t=-k} \right) \geqslant -\lambda + \left\lfloor \frac{b-a}{p} \right\rfloor - \left\lfloor \frac{k-a}{p} \right\rfloor - \left\lfloor \frac{b-k}{p} \right\rfloor. 
\end{equation}
\end{lemma}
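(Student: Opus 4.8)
The plan is to reduce everything to the classical fact that the Taylor coefficients of $1/(t+a)_{b-a+1}$ at $t=-k$, suitably normalized, are (up to sign) binomial coefficients. First I would note that for $k \in [a,b]\cap\mathbb{Z}$ the pole of $G$ at $t=-k$ is simple, so $G(t)(t+k)$ is a rational function regular at $t=-k$, and we may compute
\[
\frac{1}{\lambda!}\bigl(G(t)(t+k)\bigr)^{(\lambda)}\big|_{t=-k}
= \bigl[(t+k)^{\lambda}\bigr]\ \bigl(G(t)(t+k)\bigr)
\]
as a Taylor coefficient. Writing $G(t)(t+k) = (b-a)!\big/\prod_{j=a,\ j\neq k}^{b}(t+j)$, I would substitute $u = t+k$ and decompose
\[
\frac{(b-a)!}{\prod_{j=a,\,j\neq k}^{b}(u+k-j)}
= (-1)^{b-k}\binom{b-a}{k-a}\prod_{j=a,\,j\neq k}^{b}\frac{1}{1+u/(k-j)} \cdot \frac{1}{?}
\]
— more cleanly, I would just expand each factor $1/(u+(k-j))$ as a geometric series in $u$ and collect; the upshot is that $\lambda!^{-1}(G(t)(t+k))^{(\lambda)}|_{t=-k}$ is a $\mathbb{Z}$-linear combination of terms of the form $\binom{b-a}{k-a}\cdot (\text{product of } 1/(k-j) \text{ over a multiset})$, where each $k-j$ runs over nonzero integers of absolute value at most $b_0-a_0$. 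Since $\binom{b-a}{k-a}$ is an integer and each $|k-j|\leqslant b-a\leqslant b_0-a_0$ divides $D_{b_0-a_0}$, multiplying by $D_{b_0-a_0}^{\lambda}$ (one factor of $D_{b_0-a_0}$ for each of the $\lambda$ denominators picked up) clears all denominators. This gives \eqref{lem_G_1}. The cleanest bookkeeping is to observe directly that $\bigl(G(t)(t+k)\bigr)^{(\lambda)}\big|_{t=-k}/\lambda!$ equals $(-1)^{b-k}\binom{b-a}{k-a}$ times the complete homogeneous symmetric-type sum $h_\lambda$ of $\{1/(k-j): j\in[a,b],\ j\neq k\}$, and $D_{b_0-a_0}^{\lambda}h_\lambda\in\mathbb{Z}$ because $h_\lambda$ has a common denominator dividing $\mathrm{LCM}\{|k-j|\}^{\lambda}\mid D_{b_0-a_0}^{\lambda}$.

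For the $p$-adic bound \eqref{lem_G_2}, fix a prime $p>\sqrt{b_0-a_0}$. From the same expression, $\bigl(G(t)(t+k)\bigr)^{(\lambda)}\big|_{t=-k}$ is $(-1)^{b-k}\lambda!\,\binom{b-a}{k-a}$ times a sum of $\lambda$-fold products of reciprocals $1/(k-j)$. Each such product has $v_p \geqslant -\lambda$ trivially, \emph{except} that $v_p(\lambda!)$ could be positive; but since $p > \sqrt{b_0 - a_0} \geqslant \sqrt{b-a}$, and the nonzero integers $k-j$ all lie in $[-(b-a),b-a]\setminus\{0\}$, at most one of them is divisible by $p$ in any block of $p$ consecutive integers and none is divisible by $p^2$. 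Hence $v_p$ of any $\lambda$-fold product of the $1/(k-j)$ is at least $-\lfloor \#\{j : p\mid k-j\}\rfloor \geqslant -(\text{number of multiples of }p\text{ among } k-j)$, and one checks $v_p\bigl(\lambda!\binom{b-a}{k-a}\bigr) - (\text{that count})$ telescopes to the stated $\lfloor (b-a)/p\rfloor - \lfloor(k-a)/p\rfloor - \lfloor(b-k)/p\rfloor$ via Legendre's formula $v_p(m!) = \sum_{i\geqslant 1}\lfloor m/p^i\rfloor$ combined with $p^2 > b_0 - a_0$ (so only the $i=1$ term survives for $m\leqslant b-a$, and $v_p(\binom{b-a}{k-a}) = \lfloor(b-a)/p\rfloor - \lfloor(k-a)/p\rfloor - \lfloor(b-k)/p\rfloor$). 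Re-absorbing the $-\lambda$ from the generic reciprocal bound gives \eqref{lem_G_2}.

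I expect the main obstacle to be the careful bookkeeping in the $p$-adic estimate: one must simultaneously track the denominator contributed by the $\lambda$-fold reciprocal products and the numerator/denominator $p$-valuations coming from $\lambda!$ and $\binom{b-a}{k-a}$, and see that they combine to exactly the claimed floor expression rather than something weaker. The hypothesis $p>\sqrt{b_0-a_0}$ is what makes this work: it forces $p^2>b-a$, so Legendre's formula for $v_p((b-a)!)$ has only its first term, and it guarantees no reciprocal $1/(k-j)$ contributes more than a single power of $p$. A clean way to organize the argument is to first establish the identity
\[
\frac{1}{\lambda!}\bigl(G(t)(t+k)\bigr)^{(\lambda)}\big|_{t=-k}
= (-1)^{b-k}\binom{b-a}{k-a}\, h_\lambda\!\left(\tfrac{1}{k-a},\ldots,\widehat{\tfrac{1}{0}},\ldots,\tfrac{1}{k-b}\right),
\]
(where $h_\lambda$ denotes the degree-$\lambda$ complete homogeneous symmetric function and the hatted argument is omitted), and then read off both \eqref{lem_G_1} and \eqref{lem_G_2} from this single formula — the integrality from $\binom{b-a}{k-a}\in\mathbb{Z}$ and $\mathrm{den}(h_\lambda)\mid D_{b_0-a_0}^\lambda$, the $p$-adic bound from $v_p(h_\lambda)\geqslant -\lambda$ together with the valuation of the binomial coefficient.
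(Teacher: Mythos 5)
Your approach is a genuine attempt at a from‑scratch proof, while the paper simply cites \cite[Lemmas 16, 18]{Zud2004}. That said, there is a real gap.

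Your central identity,
\[
\frac{1}{\lambda!}\bigl(G(t)(t+k)\bigr)^{(\lambda)}\big|_{t=-k}
= \pm\binom{b-a}{k-a}\, h_\lambda\!\left(\tfrac{1}{k-j}: j\in[a,b],\ j\neq k\right),
\]
is correct (up to sign: it should be $(-1)^{k-a}$, not $(-1)^{b-k}$, but that is immaterial), \emph{but it only holds when $k\in[a,b]$}, and you say so explicitly at the start. The lemma, however, is stated for all $k\in[a_0,b_0]$, where $[a,b]\subseteq[a_0,b_0]$ may be a proper inclusion, and the paper genuinely needs the larger range: in the proof of Lemma~\ref{arith_a_i_k} it is applied with $a=\delta_j n$, $b=(M-\delta_j)n$, $a_0=\delta_1 n$, $b_0=(M-\delta_1)n$, and $k$ ranges over $[a_0,b_0]$, so for $j\geqslant 2$ one has $k\notin[a,b]$ in general. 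For such $k$ the function $G$ is regular at $t=-k$, $G(t)(t+k)$ has a simple zero there, the binomial coefficient $\binom{b-a}{k-a}$ degenerates to $0$, and your formula would wrongly give $0$ for every $\lambda$. The true expansion in that case is
\[
\frac{1}{\lambda!}\bigl(G(t)(t+k)\bigr)^{(\lambda)}\big|_{t=-k}
= \frac{(b-a)!}{\prod_{j=a}^{b}(j-k)}\;h_{\lambda-1}\!\left(\tfrac{-1}{j-k}: j\in[a,b]\right)\qquad(\lambda\geqslant1),
\]
where the prefactor $\frac{(b-a)!}{\prod_{j=a}^{b}(j-k)}=\pm\frac{1}{(b-a+1)\binom{k-a}{\,b-a+1}}$ is \emph{not} an integer and contributes its own denominator. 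One then still gets \eqref{lem_G_1} because $D_{b_0-a_0}$ times this prefactor is an integer (this uses $v_p\binom{n}{m}\leqslant v_p(D_n)$), and \eqref{lem_G_2} because the identity $\lfloor(k-b-1)/p\rfloor+\lfloor(b-k)/p\rfloor=-1$ makes the valuation bookkeeping match the stated floor expression — but none of this is in your proof, and it is not a trivial re-run of the $k\in[a,b]$ argument. A cleaner route covering all $k\in[a_0,b_0]$ at once is to start from the partial-fraction expansion $G(t)=\sum_{m=a}^{b}(-1)^{m-a}\binom{b-a}{m-a}/(t+m)$, which yields for $\lambda\geqslant1$
\[
\frac{1}{\lambda!}\bigl(G(t)(t+k)\bigr)^{(\lambda)}\big|_{t=-k}
= (-1)^{\lambda+1}\sum_{\substack{m=a\\ m\neq k}}^{b}\frac{(-1)^{m-a}\binom{b-a}{m-a}}{(m-k)^{\lambda}};
\]
from this a single estimate handles both regimes of $k$.

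Two smaller remarks on the $p$-adic part for $k\in[a,b]$: the claim that ``$v_p$ of any $\lambda$-fold product of the $1/(k-j)$ is at least $-\lfloor\#\{j:p\mid k-j\}\rfloor$'' is not correct, since the monomials of $h_\lambda$ repeat variables and so can hit a $p$-divisible $k-j$ up to $\lambda$ times; the correct and sufficient bound is $v_p(h_\lambda)\geqslant-\lambda$, which you also state, and that together with $v_p(\lambda!)\geqslant0$ and $v_p\binom{b-a}{k-a}=\lfloor(b-a)/p\rfloor-\lfloor(k-a)/p\rfloor-\lfloor(b-k)/p\rfloor$ (valid since $p^2>b_0-a_0\geqslant b-a$) gives \eqref{lem_G_2} directly; the extra ``telescoping'' discussion is not needed and muddies the argument.
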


\begin{proof}
See \cite[Lemmas 16,~18]{Zud2004}. We have replaced $b$ and $b_0$ in \cite[Lemma 16,~18]{Zud2004} by $b+1$ and $b_0+1$, respectively.
\end{proof}

\medskip

\begin{lemma}\label{lem_F}
Let $a,b,c,m$ be integers with $m > 0$ and $b>0$. Let 
\[ \mu_m(b) = b^{m} \prod_{p \mid b \atop p ~\text{prime}} p^{\lfloor m/(p-1) \rfloor}. \] 
Consider the polynomial
\[ F(t) = \mu_m(b) \cdot \frac{(ct+a/b)_m}{m!}. \]
Then, we have
\[D_{m}^{\lambda} \cdot \frac{1}{\lambda!}  F^{(\lambda)}(t)  \big|_{t=-k} \in \mathbb{Z}  \]
for any integer $k \in \mathbb{Z}$ and any non-negative integer $\lambda$.
\end{lemma}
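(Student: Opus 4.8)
The plan is to reduce the claim to a statement about the integrality of values of a single binomial-type polynomial, and then verify it prime by prime. First I would write $F(t) = \mu_m(b) \binom{ct + a/b + m - 1}{m}$, so that $F$ is a polynomial of degree $m$ in $t$ with the special feature that $b \cdot F$ has, roughly, integer values at integers in a suitable sense. The core of the argument is the classical fact (going back to the analysis of Apéry-type constructions, e.g. in Zudilin's work) that the polynomial
\[
\Pi(t) := b^m \prod_{\substack{p \mid b \\ p \text{ prime}}} p^{\lfloor m/(p-1) \rfloor} \cdot \frac{(ct + a/b)_m}{m!}
\]
takes integer values at all integers $t = -k$: the factor $b^m$ clears the denominators coming from $a/b$ inside the Pochhammer product, while the extra prime powers $p^{\lfloor m/(p-1)\rfloor}$ compensate for the $p$-adic denominators of the binomial coefficient $\binom{ct + a/b + m-1}{m}$ that survive. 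Concretely, for a prime $p$, one computes $v_p(m!) = \sum_{i \geq 1} \lfloor m/p^i \rfloor \leq m/(p-1)$ and checks that the $p$-part of $\mu_m(b)$ always dominates $v_p$ of the denominator of $(ct+a/b)_m / m!$ evaluated at any integer.

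Once integrality of $F$ at integer points is in hand, the remaining step is to pass from $F(-k) \in \mathbb{Z}$ to the claim about derivatives, namely $D_m^\lambda \cdot \tfrac{1}{\lambda!} F^{(\lambda)}(t)\big|_{t=-k} \in \mathbb{Z}$. Here I would invoke the standard discrete-calculus principle: if a polynomial $F$ of degree $m$ takes integer values at all integers, then its divided differences are integers, and the finite-difference operators are related to derivatives by
\[
\frac{1}{\lambda!} F^{(\lambda)}(-k) = \sum_{\ell} (\text{coefficients with denominators dividing } D_m) \cdot (\text{integer values of } F),
\]
more precisely $\tfrac{1}{\lambda!}F^{(\lambda)}(-k)$ is an integer linear combination of the values $F(-k), F(-k\pm 1), \ldots$ with coefficients whose denominators divide $\operatorname{lcm}(1,2,\ldots,m) = D_m$. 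This is precisely the mechanism already used in Lemma \ref{lem_G} (cf.\ \eqref{lem_G_1}), so I would either cite the same source (\cite{Zud2004}) or spell out the elementary argument: expand $F$ in the basis of binomial coefficients $\binom{t+k+j}{j}$ and differentiate, noting that each differentiation of a degree-$\leq m$ binomial introduces a denominator dividing $D_m$.

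The main obstacle — though it is more bookkeeping than genuine difficulty — is the $p$-adic verification that $\mu_m(b)$ is exactly the right normalizing factor: one must be careful that the exponent $\lfloor m/(p-1)\rfloor$ (rather than $m/(p-1)$) suffices, and that this works simultaneously for all integer evaluation points $t = -k$, including those where $ct + a/b$ is close to a multiple of a small prime power. I expect the cleanest route is simply to cite \cite{Zud2004} (where an essentially identical lemma appears, as the authors do for Lemma \ref{lem_G}), since the statement is a known technical ingredient; if a self-contained proof is wanted, the argument above via integer-valued polynomials and the $D_m$-denominator bound on finite differences completes it without new ideas.
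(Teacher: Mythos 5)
Your proposal is correct in substance. For context, the paper does not actually give a proof here: it simply says ``one can slightly modify the arguments in \cite[Prop.~3.2]{LY2020}'' (a prior paper by the same author), exactly as it cites \cite{Zud2004} for Lemma~\ref{lem_G}. Your two-step route --- (i) show $F$ is an integer-valued polynomial of degree $m$ by a prime-by-prime estimate (clear the $b$'s with $b^m$, handle $p\nmid b$ by the arithmetic-progression count $v_p\bigl(\prod_{i=0}^{m-1}(u+ib)\bigr)\geqslant v_p(m!)$, handle $p\mid b$ by $v_p(m!)\leqslant\lfloor m/(p-1)\rfloor$), then (ii) invoke the standard fact that for any integer-valued polynomial $P$ of degree $\leqslant m$ one has $D_m^{\lambda}\cdot\tfrac{1}{\lambda!}P^{(\lambda)}(k)\in\mathbb{Z}$ --- is precisely the standard argument this family of lemmas rests on, and it works.

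Two small imprecisions worth fixing, neither fatal. First, you write that the \emph{divided} differences of an integer-valued polynomial are integers; it is the \emph{finite} differences $\Delta^j F$ that are integer-valued (the $j$-th divided difference is $\Delta^j F/j!$, which need not be integral). Second, you say $\tfrac{1}{\lambda!}F^{(\lambda)}(-k)$ is a combination of integer values of $F$ ``with coefficients whose denominators divide $D_m$''; that bound is too strong and would not give the statement as written. The correct denominator bound is $D_m^{\lambda}$: writing $\tfrac{1}{\lambda!}\bigl(\tfrac{d}{dt}\bigr)^{\lambda}=\tfrac{1}{\lambda!}\bigl(\log(I+\Delta)\bigr)^{\lambda}=\sum_{n\geqslant\lambda}\tfrac{s(n,\lambda)}{n!}\Delta^n$, and truncating at $n\leqslant m$, one checks (e.g.\ via the multiset expansion of $(\sum_{i}(-1)^{i-1}x^i/i)^{\lambda}/\lambda!$, or by iterating the single-derivative bound $D_m\cdot\tfrac{d}{dt}$) that each coefficient $s(n,\lambda)/n!$ has denominator dividing $D_m^{\lambda}$, which is exactly what the lemma claims. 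With that correction your plan is a complete and standard proof.
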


\begin{proof}
This lemma is known in the literature but not stated as above. One can slightly modify the arguments in \cite[Prop. 3.2]{LY2020} to obtain a proof.
\end{proof}

\bigskip

In the rest of this section, we will first establish the arithmetic properties of the coefficients $a_{n,i,k}$ (defined by \eqref{def_a_i_k}). Following this, we will address the coefficients $\rho_{n,i}$ (defined by \eqref{def_rho_i}) and $\rho_{n,0,\theta}$ (defined by \eqref{def_rho_0_theta}).

\begin{lemma}\label{arith_a_i_k}
For any $n \in P_{B,\operatorname{den}(r)}\mathbb{N}$ with $n > s^2$, we have
\[ \Phi_n^{-s/J} D_{(M-2\delta_1)n}^{s-i} \cdot  a_{n,i,k} \in \mathbb{Z}, \quad (1 \leqslant i \leqslant s,~\delta_1 n \leqslant k \leqslant (M-\delta_1)n),   \]
where the factor $\Phi_n$ is a product of certain primes:
\begin{equation}\label{defi_Phi}
\Phi_n := \prod_{\sqrt{Mn} < p \leqslant (M-2\delta_1)n \atop p \text{~prime}} p^{\omega(n/p)}, 
\end{equation}
and the function $\omega(\cdot)$ is defined by
\begin{equation}\label{defi_omega}
\omega(x) = \min_{0 \leqslant y < 1} \sum_{j=1}^{J}  \left( \lfloor(M-2\delta_j)x\rfloor - \lfloor y-\delta_j x \rfloor - \lfloor (M-\delta_j)x - y \rfloor \right).
\end{equation}
\end{lemma}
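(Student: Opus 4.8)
The plan is to express each $a_{n,i,k}$ as a normalised top-order derivative of $(t+k)^sR_n(t)$ at $t=-k$, to split $(t+k)^sR_n(t)$ into ``elementary'' factors each governed by Lemma~\ref{lem_G} or Lemma~\ref{lem_F}, and then to extract the arithmetic via the Leibniz rule, sharpening the estimate at primes $p>\sqrt{Mn}$ by means of \eqref{lem_G_2}.

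\emph{Step 1 (residue formula).} Since $\deg R_n(t)\leqslant-2$ and the only pole of $R_n(t)$ in a small disc around $t=-k$ is the one at $t=-k$, of order at most $s$ (it comes from $\prod_{j}(t+\delta_jn)_{(M-2\delta_j)n+1}^{s/J}$, possibly reduced by the factor $2t+Mn$), the function $(t+k)^sR_n(t)$ is holomorphic at $t=-k$ and
\[ a_{n,i,k}=\frac{1}{(s-i)!}\left.\frac{d^{\,s-i}}{dt^{\,s-i}}\Bigl((t+k)^sR_n(t)\Bigr)\right|_{t=-k}\qquad(1\leqslant i\leqslant s), \]
the right-hand side vanishing automatically once $i$ exceeds the pole order, consistently with \eqref{def_a_i_k}.

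\emph{Step 2 (factorisation into elementary pieces).} I would write $(t+k)^s=\prod_{j=1}^{J}(t+k)^{s/J}$ and fold one factor $(t+k)^{s/J}$ into the $j$-th denominator block: putting $G_j(t):=((M-2\delta_j)n)!\big/(t+\delta_jn)_{(M-2\delta_j)n+1}$, one gets $((M-2\delta_j)n)!^{s/J}(t+k)^{s/J}\big/(t+\delta_jn)_{(M-2\delta_j)n+1}^{s/J}=\bigl(G_j(t)(t+k)\bigr)^{s/J}$, and the numerator factorials $\prod_j((M-2\delta_j)n)!^{s/J}$ are cancelled. For the remaining numerator of $R_n(t)$, observe that $\mathcal{Z}_B\setminus\{1\}$ contains exactly $\varphi(b)$ elements of denominator $b$, whence
\[ A_1(B)^nA_2(B)^n=\prod_{\theta\in\mathcal{Z}_B\setminus\{1\}}\Bigl(\operatorname{den}(\theta)^{(2r+1)Mn}\!\!\prod_{p\,\mid\,\operatorname{den}(\theta)}\!\!p^{(2r+1)Mn/(p-1)}\Bigr); \]
moreover $n\in P_{B,\operatorname{den}(r)}\mathbb{N}$ forces $(p-1)\mid(n/\operatorname{den}(r))$ for every prime $p$ dividing some $b\in\Psi_B$, so the $\bigl((2r+1)M\operatorname{den}(r)\bigr)$-th root of each inner product above is exactly $\mu_m(\operatorname{den}(\theta))$, with $m:=n/\operatorname{den}(r)$. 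I would then cut each $(t-rMn+\theta)_{(2r+1)Mn}$ ($\theta\neq1$) into $(2r+1)M\operatorname{den}(r)$ consecutive blocks of length $m$, and each of $(t-rMn)_{rMn}$ and $(t+Mn+1)_{rMn}$ into $rM\operatorname{den}(r)$ such blocks --- altogether $\operatorname{den}(r)(2r+1)M|\mathcal{Z}_B|-\operatorname{den}(r)M$ blocks, matching the number of copies of $(n/\operatorname{den}(r))!$ in the denominator of $R_n(t)$ --- and pair each block with one copy of $(n/\operatorname{den}(r))!$ and, for $\theta\neq1$, with one copy of $\mu_m(\operatorname{den}(\theta))$. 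This produces
\[ (t+k)^sR_n(t)=\Bigl(\prod_{j=1}^{J}\bigl(G_j(t)(t+k)\bigr)^{s/J}\Bigr)\cdot(2t+Mn)\cdot\prod_{\nu}F_\nu(t), \]
where each $F_\nu(t)$ has the form $\mu_m(b)(t+c+a/b)_m/m!$ with $m=n/\operatorname{den}(r)$, $c\in\mathbb{Z}$, $b\in\Psi_B\cup\{1\}$, $\gcd(a,b)=1$ --- precisely the shape treated in Lemma~\ref{lem_F} (an integer shift of $t$ does no harm, that lemma allowing evaluation at any integer point).

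\emph{Step 3 (Leibniz rule and the $\Phi_n$-saving).} Expanding the last display by the generalized Leibniz rule writes $a_{n,i,k}$ as a finite sum, over tuples $(\lambda_\nu)$ with $\sum_\nu\lambda_\nu=s-i$, of products of the numbers $\tfrac{1}{\lambda_\nu!}g_\nu^{(\lambda_\nu)}(t)|_{t=-k}$, one for each elementary factor $g_\nu$; I distribute $D_{(M-2\delta_1)n}^{\,s-i}=\prod_\nu D_{(M-2\delta_1)n}^{\,\lambda_\nu}$ among them. Each $G_j$-block is covered by Lemma~\ref{lem_G} with $a=\delta_jn$, $b=(M-\delta_j)n$, $a_0=\delta_1n$, $b_0=(M-\delta_1)n$ (so $b_0-a_0=(M-2\delta_1)n$ and $k\in[a_0,b_0]$); each $F_\nu$-block by Lemma~\ref{lem_F} with $m=n/\operatorname{den}(r)\leqslant(M-2\delta_1)n$, whence $D_m\mid D_{(M-2\delta_1)n}$; and $2t+Mn$ contributes only integers. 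This already gives $D_{(M-2\delta_1)n}^{\,s-i}a_{n,i,k}\in\mathbb{Z}$. To get the stronger divisibility, fix a prime $p$ with $\sqrt{Mn}<p\leqslant(M-2\delta_1)n$; then $v_p(D_{(M-2\delta_1)n})=1$, $v_p(D_m)\leqslant1$, and $v_p(\lambda!)=0$, since $n>s^2$ gives $p>\sqrt{Mn}\geqslant\sqrt{n}>s\geqslant\lambda$. Then each $F_\nu$-block contributes $v_p\bigl(D_{(M-2\delta_1)n}^{\,\lambda}\tfrac{1}{\lambda!}F_\nu^{(\lambda)}|_{t=-k}\bigr)\geqslant\lambda\bigl(1-v_p(D_m)\bigr)\geqslant0$, and $2t+Mn$ likewise (as $p\geqslant3$), while for each $G_j$-block, \eqref{lem_G_2} together with $v_p(\lambda!)=0$ gives the bound, \emph{uniform in $\lambda$},
\[ v_p\Bigl(D_{(M-2\delta_1)n}^{\,\lambda}\tfrac{1}{\lambda!}\bigl(G_j(t)(t+k)\bigr)^{(\lambda)}\big|_{t=-k}\Bigr)\;\geqslant\;\left\lfloor\tfrac{(M-2\delta_j)n}{p}\right\rfloor-\left\lfloor\tfrac{k-\delta_jn}{p}\right\rfloor-\left\lfloor\tfrac{(M-\delta_j)n-k}{p}\right\rfloor. \]
Writing $k=p\lfloor k/p\rfloor+(k\bmod p)$ and putting $x:=n/p$, $y:=(k\bmod p)/p\in[0,1)$, the $\lfloor k/p\rfloor$-terms cancel and the right-hand side becomes $\lfloor(M-2\delta_j)x\rfloor-\lfloor y-\delta_jx\rfloor-\lfloor(M-\delta_j)x-y\rfloor$. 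Summing over the $s/J$ copies within each of the $J$ blocks and invoking \eqref{defi_omega}, every term in the Leibniz expansion of $D_{(M-2\delta_1)n}^{\,s-i}a_{n,i,k}$ has $p$-adic valuation at least $\tfrac{s}{J}\sum_{j=1}^{J}\bigl(\lfloor(M-2\delta_j)x\rfloor-\lfloor y-\delta_jx\rfloor-\lfloor(M-\delta_j)x-y\rfloor\bigr)\geqslant\tfrac{s}{J}\,\omega(n/p)$. Hence $v_p\bigl(D_{(M-2\delta_1)n}^{\,s-i}a_{n,i,k}\bigr)\geqslant\tfrac{s}{J}\,\omega(n/p)$ for every such $p$, and $\geqslant0$ at all other primes; since $s/J$ is an integer, this says exactly $\Phi_n^{-s/J}D_{(M-2\delta_1)n}^{\,s-i}a_{n,i,k}\in\mathbb{Z}$.

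\emph{Expected main obstacle.} The Leibniz step and the valuation estimate are routine once the data are arranged correctly; the delicate points are the bookkeeping of Step~2 --- checking that the three normalising constants $\prod_j((M-2\delta_j)n)!^{s/J}$, $A_1(B)^nA_2(B)^n$ and $(n/\operatorname{den}(r))!^{\operatorname{den}(r)(2r+1)M|\mathcal{Z}_B|-\operatorname{den}(r)M}$ match the Pochhammer symbols of $R_n(t)$ \emph{exactly} (this is where the precise definitions of $P_{B,\operatorname{den}(r)}$, $A_1(B)$ and $A_2(B)$ are used) --- and the observation in Step~3 that \eqref{lem_G_2} is independent of $\lambda$: this uniformity, aided by $v_p(\lambda!)=0$ (which is why $n>s^2$ is imposed), is exactly what lets the exponent $\omega(n/p)$ survive the Leibniz summation with its full multiplicity $s/J$, so that the saving $\Phi_n^{s/J}$ is genuine.
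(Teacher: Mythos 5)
Your proof is correct and follows essentially the same route as the paper: express $a_{n,i,k}$ via the residue formula, factor $(t+k)^sR_n(t)$ into the $G_j$-blocks and the length-$(n/\operatorname{den}(r))$ Pochhammer blocks $F_\nu$ (absorbing $A_1(B)^nA_2(B)^n$ as copies of $\mu_m(\operatorname{den}(\theta))$), apply Lemmas~\ref{lem_G} and~\ref{lem_F} termwise in the Leibniz expansion for the integrality statement, and then sharpen at primes $p\in(\sqrt{Mn},(M-2\delta_1)n]$ using \eqref{lem_G_2} together with $v_p(\lambda!)=0$ (forced by $n>s^2$) to recover the exponent $\tfrac{s}{J}\,\omega(n/p)$. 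Your Step~2 supplies bookkeeping that the paper leaves implicit, but the decomposition and the key valuation estimate are the same.
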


\begin{proof}
Fix any $i \in \{1,2,\ldots,s\}$ and any $k \in [\delta_1n,~(M-\delta_1)n] \cap \mathbb{Z}$. By \eqref{def_a_i_k}, we have
\begin{equation}\label{a_i_k_determined_by_R_n}
a_{n,i,k} = \frac{1}{(s-i)!} \left( R_n(t)(t+k)^{s} \right)^{(s-i)} \Big|_{t=-k}.
\end{equation}

Define the following rational functions (we use the notation $\mu_m(b)$ in Lemma \ref{lem_F}): 
\begin{align*}
F_0(t) &:= 2t+Mn, \\
F_{1,-,\nu}(t) &:= \frac{(t-rMn+ (\nu-1)n/\operatorname{den}(r))_{n/\operatorname{den}(r)}}{\left( n/\operatorname{den}(r) \right)!}, \quad \nu=1,2,\ldots,\operatorname{den}(r) rM, \\
F_{1,+,\nu}(t) &:= \frac{(t+Mn+1 + (\nu -1)n/\operatorname{den}(r))_{n/\operatorname{den}(r)}}{\left( n/\operatorname{den}(r) \right)!}, \quad \nu=1,2,\ldots,\operatorname{den}(r) rM, \\
F_{\theta,\nu}(t) &:= \mu_{n/\operatorname{den}(r)}(\operatorname{den}(\theta)) \cdot \frac{(t - rMn + \theta + (\nu-1)n/\operatorname{den}(r))_{n/\operatorname{den}(r)}}{\left(n/\operatorname{den}(r) \right)!}, \\ 
&\qquad\qquad\qquad\qquad \theta \in \mathcal{Z}_{B} \setminus \{1\}, \quad \nu = 1,2,\ldots, \operatorname{den}(r)(2r+1)M, \\
G_j(t) &:= \frac{((M-2\delta_j)n)!}{\left( t+\delta_j n \right)_{(M-2\delta_j)n+1}}, \quad j=1,2,\ldots,J.
\end{align*}
By Definition \ref{def_R_n}, we have
\begin{align}
	R_n(t)(t+k)^s = &F_0(t) \prod_{\nu=1}^{\operatorname{den}(r) rM} (F_{1,-,\nu}(t)F_{1,+,\nu}(t)) \prod_{\theta \in \mathcal{Z}_B \setminus \{1\} }\prod_{\nu=1}^{\operatorname{den}(r)(2r+1)M} F_{\theta,\nu}(t) \notag\\
	&\times \prod_{j=1}^{J} (G_j(t)(t+k))^{s/J}. \label{R=FFFGGG}
\end{align}
By Lemma \ref{lem_F}, for any polynomial $F(t)$ of the form $F_0(t)$, $F_{1,\pm,\nu}(t)$, $F_{\theta,\nu}(t)$, we have
\begin{equation}\label{F_is_good}
D_{n/\operatorname{den}(r)}^{\lambda} \cdot \frac{1}{\lambda!} F^{(\lambda)}(t) \big|_{t=-k} \in \mathbb{Z} 
\end{equation} 
for any non-negative integer $\lambda$.
By \eqref{lem_G_1} of Lemma \ref{lem_G} (with $a_0 = \delta_1n$ and $b_0 = (M-\delta_1)n$), we have
\begin{equation}\label{G_is_good}
D_{(M-2\delta_1)n}^{\lambda} \cdot \frac{1}{\lambda!} \left( G_j(t)(t+k) \right)^{(\lambda)} \big|_{t=-k} \in \mathbb{Z} 
\end{equation}
for any $j=1,2,\ldots,J$ and any non-negative integer $\lambda$. Now, substituting \eqref{R=FFFGGG} into \eqref{a_i_k_determined_by_R_n}, applying the Leibniz rule, and using \eqref{F_is_good}\eqref{G_is_good}, we obtain
\begin{equation}\label{arith_anik_1}
D_{(M-2\delta_1)n}^{s-i} \cdot  a_{n,i,k} \in \mathbb{Z}.
\end{equation} 

Moreover, for any prime $p$ such that $\sqrt{Mn} < p \leqslant (M-2\delta_1)n$, taking also \eqref{lem_G_2} of Lemma \ref{lem_G} into consideration, and noting that $p > s$ (because $n > s^2$), we obtain
\begin{align}
v_p\left( a_{n,i,k} \right) &=  v_p\left( \left( R_n(t)(t+k)^{s} \right)^{(s-i)} \Big|_{t=-k} \right) \notag\\
	&\geqslant -(s-i) + \frac{s}{J}\cdot\sum_{j=1}^{J} \left( \left\lfloor \frac{(M-2\delta_j)n}{p}  \right\rfloor - \left\lfloor \frac{k-\delta_jn}{p}  \right\rfloor - \left\lfloor \frac{(M-\delta_j)n - k}{p}  \right\rfloor \right)  \notag\\
	&\geqslant -(s-i) + \frac{s}{J}\cdot\omega\left( \frac{n}{p} \right), \label{arith_anik_2}
\end{align}
where the function $\omega(\cdot)$ is given by \eqref{defi_omega}. Combining \eqref{arith_anik_1} and \eqref{arith_anik_2}, we obtain the desired conclusion:
\[ \Phi_n^{-s/J} D_{(M-2\delta_1)n}^{s-i} \cdot  a_{n,i,k} \in \mathbb{Z}, \quad (1 \leqslant i \leqslant s,~\delta_1n \leqslant k \leqslant (M-\delta_1)n).   \]
\end{proof}

\begin{lemma}\label{arith_rho_i}
For any $n \in P_{B,\operatorname{den}(r)}\mathbb{N}$ with $n > s^2$, we have 
\[ \Phi_n^{-s/J} D_{(M-2\delta_1)n}^{s-i} \cdot  \rho_{n,i} \in \mathbb{Z}, \quad (3 \leqslant i \leqslant s-1,~i~\text{odd}).  \]
\end{lemma}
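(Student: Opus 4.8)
The plan is to deduce Lemma \ref{arith_rho_i} directly from Lemma \ref{arith_a_i_k} by summing over $k$. Recall from \eqref{def_rho_i} that
\[ \rho_{n,i} = \sum_{k=\delta_1 n}^{(M-\delta_1)n} a_{n,i,k}, \]
so $\rho_{n,i}$ is a finite $\mathbb{Z}$-linear combination (in fact a plain sum) of the numbers $a_{n,i,k}$. Since $\mathbb{Z}$ is closed under addition, it suffices to observe that each summand $\Phi_n^{-s/J} D_{(M-2\delta_1)n}^{s-i} a_{n,i,k}$ lies in $\mathbb{Z}$, which is precisely the content of Lemma \ref{arith_a_i_k} under the same hypotheses $n \in P_{B,\operatorname{den}(r)}\mathbb{N}$ and $n > s^2$, valid for every $k$ in the range $\delta_1 n \leqslant k \leqslant (M-\delta_1)n$. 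One minor point to record is that the index range $3 \leqslant i \leqslant s-1$ with $i$ odd appearing in Lemma \ref{arith_rho_i} is contained in the range $1 \leqslant i \leqslant s$ of Lemma \ref{arith_a_i_k}, so the cited integrality applies verbatim.

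First I would state that $n \in P_{B,\operatorname{den}(r)}\mathbb{N}$ and $n > s^2$ are assumed, and fix an odd integer $i$ with $3 \leqslant i \leqslant s-1$. Next I would invoke Lemma \ref{arith_a_i_k} to get $\Phi_n^{-s/J} D_{(M-2\delta_1)n}^{s-i} a_{n,i,k} \in \mathbb{Z}$ for every $k \in [\delta_1 n, (M-\delta_1)n] \cap \mathbb{Z}$. Finally I would sum this over $k$ and use \eqref{def_rho_i} together with the fact that $\Phi_n^{-s/J} D_{(M-2\delta_1)n}^{s-i}$ is a common factor independent of $k$, concluding
\[ \Phi_n^{-s/J} D_{(M-2\delta_1)n}^{s-i} \rho_{n,i} = \sum_{k=\delta_1 n}^{(M-\delta_1)n} \Phi_n^{-s/J} D_{(M-2\delta_1)n}^{s-i} a_{n,i,k} \in \mathbb{Z}. \]

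There is essentially no obstacle here: the lemma is a one-line corollary of the previous one. The only thing to be careful about is bookkeeping — making sure the hypotheses transfer (they do, being identical) and that the quantifier on $i$ is handled correctly — but no genuine difficulty arises. The substantive arithmetic work was already done in the proof of Lemma \ref{arith_a_i_k}, where the Leibniz rule, Lemmas \ref{lem_G} and \ref{lem_F}, and the $p$-adic valuation estimate leading to the factor $\Phi_n$ were combined.
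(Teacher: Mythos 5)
Your proof is correct and matches the paper's argument exactly: the paper also observes that the lemma follows directly from \eqref{def_rho_i} and Lemma \ref{arith_a_i_k}. You have simply spelled out the one-line summation argument that the paper leaves implicit.
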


\begin{proof}
It follows directly from \eqref{def_rho_i} and Lemma \ref{arith_a_i_k}.
\end{proof}

\begin{lemma}\label{arith_rho_0_theta}
For any $n \in P_{B,\operatorname{den}(r)}\mathbb{N}$ with $n > s^2$, for any $\theta \in \mathcal{Z}_B \setminus \{ 1 \}$, we have 
\[  \Phi_n^{-s/J} D_{(M-2\delta_1)n}^{s} \cdot  \rho_{n,0,\theta} \in \mathbb{Z}.  \]

\end{lemma}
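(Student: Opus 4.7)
My plan is to follow the template of Lemma~\ref{arith_rho_i}, with an additional factor of $D_{(M-2\delta_1)n}^{i}$ gained to absorb the partial harmonic-like sum $\sum_{\ell=0}^{k-1}(\ell+\theta)^{-i}$.

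First, I will swap the summation order in \eqref{def_rho_0_theta} to obtain
\[
\rho_{n,0,\theta} = -\sum_{i=1}^{s} \sum_{\ell=0}^{(M-\delta_1)n-1} \frac{T_{n,i,\ell}}{(\ell+\theta)^{i}},
\qquad T_{n,i,\ell} := \sum_{k=\max(\delta_1 n,\,\ell+1)}^{(M-\delta_1)n} a_{n,i,k},
\]
and apply Lemma~\ref{arith_a_i_k} termwise to conclude that $\Phi_n^{-s/J} D_{(M-2\delta_1)n}^{s-i}\, T_{n,i,\ell} \in \mathbb{Z}$ for every admissible $i$ and $\ell$.

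Next, writing $\theta = a/b$ with $b \in \Psi_B$ and $\gcd(a,b)=1$, I will use the identity $(\ell+\theta)^{-i} = b^{i}(b\ell+a)^{-i}$ together with the coprimality $\gcd(b\ell+a,b)=1$. This reduces the integrality claim to a prime-by-prime bound on
\[
\sum_{\ell=0}^{(M-\delta_1)n-1} c_{n,i,\ell} \cdot \frac{(D_{(M-2\delta_1)n}\, b)^{i}}{(b\ell+a)^{i}},
\qquad c_{n,i,\ell} := \Phi_n^{-s/J}\, D_{(M-2\delta_1)n}^{s-i}\, T_{n,i,\ell} \in \mathbb{Z}.
\]
For primes $p \mid b$, coprimality makes every summand $p$-integral; for primes $p \leqslant (M-2\delta_1)n$ coprime to $b$, the factor $D_{(M-2\delta_1)n}^{i}$ cancels $(b\ell+a)^{i}$ termwise.

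The hard part will be primes $p$ lying in the range $\bigl((M-2\delta_1)n,\, b(M-\delta_1)n\bigr]$ that divide some $b\ell+a$: for such $p$ the individual summands can carry negative $p$-adic valuation, so integrality must come from summation. To handle them, I will run a $p$-adic analysis paralleling the derivation of \eqref{arith_anik_2}. Concretely, I will expand $T_{n,i,\ell}$ by the Leibniz rule applied to the factorization $R_n(t)(t+k)^{s} = F_0(t) \prod_{\nu} F_{1,\pm,\nu}(t) \prod_{\theta',\nu} F_{\theta',\nu}(t) \prod_{j}(G_j(t)(t+k))^{s/J}$, and I will track each factor's $p$-adic contribution. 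Under the constraint $p \mid b\ell+a$, the factors $F_{\theta,\nu}$ corresponding to the specific $\theta$ at hand are expected to yield the additional $p$-divisibility in $T_{n,i,\ell}$ that exactly compensates for the $p^{i}$ denominator coming from $(b\ell+a)^{i}$; this is the step where one must replicate, in the partial-sum setting, the fine valuation estimates used for $a_{n,i,k}$ in Lemma~\ref{arith_a_i_k}.
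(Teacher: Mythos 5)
There is a genuine gap: the step you yourself identify as ``the hard part'' is exactly the content of the lemma, and you leave it as an expectation rather than a proof. The hoped-for mechanism --- that the partial sums $T_{n,i,\ell}=\sum_{k>\ell}a_{n,i,k}$ individually acquire enough extra $p$-divisibility to cancel $(b\ell+a)^{i}$ termwise in $i$ --- is not established anywhere, and it is not the mechanism that makes the statement true. The paper's proof does not produce any extra valuation in the coefficients at all; instead it uses a global cancellation coming from the symmetry of $\mathcal{Z}_B\setminus\{1\}$ under $x\mapsto 1-x$: since $1-\theta\in\mathcal{Z}_B\setminus\{1\}$, the numerator of $R_n$ in Definition \ref{def_R_n} contains the factor $(t-rMn+1-\theta)_{(2r+1)Mn}$, hence $R_n(\ell-k_0+\theta)=0$ for all $0\leqslant \ell<k_0\leqslant (M-\delta_1)n$. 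Evaluating the partial-fraction identity \eqref{def_a_i_k} at $t=\ell-k_0+\theta$ rewrites the problematic block $\sum_i a_{n,i,k_0}/(\ell+\theta)^{i}$ as $-\sum_{k\neq k_0}\sum_i a_{n,i,k}/(k-k_0+\ell+\theta)^{i}$; if a prime $p$ gave negative valuation on both sides, Lemma \ref{arith_a_i_k} would force $v_p(\ell+\theta)>v_p(D_{(M-2\delta_1)n})$ and $v_p(k_1-k_0+\ell+\theta)>v_p(D_{(M-2\delta_1)n})$ for some $k_1\neq k_0$, hence $v_p(k_1-k_0)>v_p(D_{(M-2\delta_1)n})$, impossible since $0<|k_1-k_0|\leqslant (M-2\delta_1)n$. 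Note that this is where $\theta\neq 1$ enters (via $1-\theta$, not via the factor $(t-rMn+\theta)_{(2r+1)Mn}$ you point to), and it is consistent with the Remark that the statement is genuinely weaker for $\theta=1$; your plan never isolates this input, so whatever valuation bookkeeping you attempted along the lines of \eqref{arith_anik_2} could not ``exactly compensate'' without it.

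A secondary error: your trichotomy of primes is wrong. For a prime $p\leqslant (M-2\delta_1)n$ with $p\nmid b$, the factor $D_{(M-2\delta_1)n}^{i}$ does \emph{not} cancel $(b\ell+a)^{i}$ termwise, because $b\ell+a$ can be as large as about $b(M-\delta_1)n$ and may be divisible by a power $p^{e}>(M-2\delta_1)n$ even though $p$ itself is small; so the problematic set is not just the primes in $\bigl((M-2\delta_1)n,\,b(M-\delta_1)n\bigr]$ but all primes $p$ (coprime to $b$) with $v_p(b\ell+a)>v_p(D_{(M-2\delta_1)n})$. The paper's argument handles all of these uniformly through the valuation comparison above, whereas your plan, as written, neither covers them nor supplies a proof for the range you do flag.
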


\begin{proof}
We prove by contradiction. Let $\theta \in \mathcal{Z}_B \setminus \{1\}$ and suppose that 
\[ \Phi_n^{-s/J} D_{(M-2\delta_1)n}^{s} \cdot  \rho_{n,0,\theta} \not\in \mathbb{Z}. \]
By substituting \eqref{def_rho_0_theta} into above equation, we obtain that, there exist integers $k_0$, $\ell_0$ such that $\delta_1 n \leqslant k_0 \leqslant (M-\delta_1)n$, $0 \leqslant \ell_0 < k_0$, and
\[ \Phi_n^{-s/J} D_{(M-2\delta_1)n}^{s}  \sum_{i=1}^{s} \frac{a_{n,i,k_0}}{(\ell_0+\theta)^i} \not\in \mathbb{Z}.  \] 
Note that $1 - \theta \in \mathcal{Z}_B \setminus \{ 1 \}$ and $t+k_0-\ell_0-\theta$ is a factor of $(t-rMn+1-\theta)_{(2r+1)Mn}$. Thus, $t+k_0-\ell_0-\theta$ is a factor of the numerator of $R_n(t)$ (see Definition \ref{def_R_n}) and
\[ R_n(-k_0+\ell_0 + \theta) = 0. \]
Then, by \eqref{def_a_i_k} we have
\[ \Phi_n^{-s/J} D_{(M-2\delta_1)n}^{s}  \sum_{i=1}^{s} \frac{a_{n,i,k_0}}{(\ell_0+\theta)^i} =  - \Phi_n^{-s/J} D_{(M-2\delta_1)n}^{s}  \sum_{i=1}^{s}\sum_{k = \delta_1 n \atop k \neq k_0}^{(M-\delta_1)n} \frac{a_{n,i,k}}{(k-k_0+\ell_0+\theta)^i} \not\in \mathbb{Z}. \]
Therefore, there exist a prime $p$, two indices $i_0,i_1 \in \{1,2,\ldots,s\}$, and an index $k_1 \in \{ \delta_1 n, \ldots, (M-\delta_1)n \}$ with $k_1 \neq k_0$ such that
\[ v_p\left( \Phi_n^{-s/J} D_{(M-2\delta_1)n}^{s} \cdot \frac{a_{n,i_0,k_0}}{(\ell_0+\theta)^{i_0}} \right) < 0, \quad  v_p\left( \Phi_n^{-s/J} D_{(M-2\delta_1)n}^{s} \cdot \frac{a_{n,i_1,k_1}}{(k_1-k_0+\ell_0+\theta)^{i_1}} \right) < 0. \]
By Lemma \ref{arith_a_i_k}, we have 
\[ v_p\left( \Phi_n^{-s/J} D_{(M-2\delta_1)n}^{s-i_0} \cdot a_{n,i_0,k_0}\right) \geqslant 0, \quad v_p\left( \Phi_n^{-s/J} D_{(M-2\delta_1)n}^{s-i_1} \cdot a_{n,i_1,k_1}\right) \geqslant 0. \]
Thus,
\[ v_p\left( \ell_0 + \theta \right) > v_p(D_{(M-2\delta_1)n}), \quad v_p\left( k_1 - k_0 + \ell_0 + \theta \right) > v_p(D_{(M-2\delta_1)n}). \]
It follows that $v_p(k_1-k_0) > v_p(D_{(M-2\delta_1)n})$, which is absurd because $0 < |k_1-k_0| \leqslant (M-2\delta_1)n$. 
\end{proof}

\medskip

\begin{remark}
For $\theta = 1$, we can prove that
\[ \Phi_n^{-s/J} \prod_{j=1}^{J} D_{M_j n}^{s/J} \cdot  \rho_{n,0,1} \in \mathbb{Z}, \]
where $M_j = \max\{ M-2\delta_1, M-\delta_j \}$. The proof is the same as that of \cite[Lem. 4.4]{Lai2024+}. In some sense, the arithmetic property of $\rho_{n,0,1}$ is the worst among $\rho_{n,0,\theta}$ ($\theta \in \mathcal{Z}_B$). We will eliminate $\rho_{n,0,1}$ in our final linear forms (see the proof of Lemma \ref{eli}). 
\end{remark}

\section{Asymptotic estimates}\label{sec_4}

Recall that $S_{n,\theta}$ ($\theta \in \mathcal{Z}_B$) are defined by \eqref{def_S_n_theta}. We have the following asymptotic estimates for $S_{n,\theta}$ ($\theta \in \mathcal{Z}_B$) as $n \in P_{B,\operatorname{den}(r)}\mathbb{N}$ and $n \to +\infty$.

\begin{lemma}\label{lem_ana}
For $\theta =1 \in \mathcal{Z}_B$, we have
\[ \lim_{n \to +\infty} S_{n,1}^{1/n} = g(x_0), \]
where 
\begin{align*}
	g(x) :=& A_1(B)A_2(B) \operatorname{den}(r)^{(2r+1)M|\mathcal{Z}_B| - M} \left( \prod_{j=1}^{J} (M-2\delta_j)^{M-2\delta_j} \right)^{s/J} \\
	&\times \left( (2r+1)M + x \right)^{(2r+1)M|\mathcal{Z}_B|} \cdot \frac{(rM+x)^{rM}}{((r+1)M+x)^{(r+1)M}} \\
	&\times \prod_{j=1}^{J} \left( \frac{(rM+\delta_j+x)^{rM+\delta_j}}{\left( (r+1)M - \delta_j + x \right)^{(r+1)M-\delta_j}} \right)^{s/J},
\end{align*}
and $x_0$ is the unique positive real solution of the equation $f(x)=1$ with
\begin{align*}
	f(x) := \left( \frac{(2r+1)M+x}{x} \right)^{|\mathcal{Z}_B|} \frac{rM+x}{(r+1)M + x} \prod_{j=1}^{J} \left( \frac{rM+\delta_j+x}{(r+1)M-\delta_j+x} \right)^{s/J}.
\end{align*}
Moreover, for any $\theta \in \mathcal{Z}_B$, we have
\[ \lim_{n \to +\infty} \frac{S_{n,1}}{S_{n,\theta}}  = 1.  \]
\end{lemma}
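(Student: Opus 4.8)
The plan is to analyze the integral (or Laplace-type) representation of $S_{n,\theta}$ and extract its exponential growth rate via the saddle-point / Laplace method. First, I would rewrite $R_n(t)$ in terms of Gamma functions, so that $S_{n,\theta} = \sum_{m\geqslant 0} R_n(m+\theta)$ becomes, after recognizing the sum as a very-well-poised hypergeometric-type series, a contour integral of the form $\frac{1}{2\pi i}\int R_n(t)\,\frac{\pi}{\tan(\pi t)}\,dt$ (or a real integral obtained by the Nørlund–Rice / residue approach, as in \cite{Zud2004, LY2020}). The essential point is that $R_n(t)$ has the shape $\prod \Gamma(\cdots n + \cdots)^{\pm}$ times the normalization constants $A_1(B)^n A_2(B)^n \operatorname{den}(r)^{(\cdots)n}$ times the factorials $\prod_j ((M-2\delta_j)n)!^{s/J}$ and $(n/\operatorname{den}(r))!^{-(\cdots)n}$; setting $t = xn$ and applying Stirling's formula, $R_n(xn)^{1/n}$ converges to an explicit function, and the normalization constants contribute exactly the prefactor appearing in $g(x)$ together with the polynomial-in-$x$ factors. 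I expect the limit $\lim_n S_{n,\theta}^{1/n}$ to equal $\max$ or the appropriate critical value of this function along the relevant path; the specific claim is that this critical value equals $g(x_0)$, where $x_0$ solves $f(x)=1$ and $f$ is precisely the logarithmic derivative condition $\frac{d}{dx}\log(\text{integrand}) = 0$ coming from the saddle point. So the first main step is: establish the integral representation and identify $f(x)=1$ as the saddle-point equation, and $g(x_0)$ as the value of the integrand there (up to subexponential factors).

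Second, I would verify that $f(x) = 1$ has a unique positive real solution $x_0$. Here I would check that $f$ is continuous and strictly monotating on $(0,+\infty)$: as $x\to 0^+$, the factor $((2r+1)M+x)/x)^{|\mathcal{Z}_B|}\to +\infty$, while the other factors tend to finite positive limits, so $f(0^+)=+\infty$; as $x\to+\infty$, every ratio $\to 1$, so $f(+\infty)=1$, but one must confirm the approach is from above or below in a way consistent with monotonicity. Taking the logarithmic derivative, $\frac{f'}{f}(x) = -\frac{|\mathcal{Z}_B|}{x} \cdot \frac{(2r+1)M}{(2r+1)M+x} + (\text{smaller terms of order } 1/x^2)$, and since $|\mathcal{Z}_B|$ is large (of order $B^2$) compared to the bounded quantities $M, \delta_j$, while $s/J$ is balanced against this by condition \eqref{cond_s_B}, one shows $f$ is strictly decreasing, giving uniqueness. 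This monotonicity argument, tracking the competition between the $|\mathcal{Z}_B|$-power term and the $s/J$-power terms under \eqref{cond_s_B}, is a routine but slightly delicate estimate.

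Third, for the ratio statement $\lim_n S_{n,1}/S_{n,\theta} = 1$: the point is that $S_{n,\theta}$ for different $\theta$ have the same exponential growth rate and, more strongly, the same leading asymptotic. I would argue that in the saddle-point analysis, the shift $\theta$ (a fixed rational in $(0,1]$, independent of $n$) only perturbs the integrand by a bounded, subexponential factor and does not move the saddle point $x_0$; hence both $S_{n,1}$ and $S_{n,\theta}$ are asymptotic to the same expression $C(n)\, g(x_0)^n$ where $C(n)$ is subexponential and $\theta$-independent to leading order. Concretely, one compares $R_n(m+\theta)$ with $R_n(m+1)$ term by term near the dominant range $m \approx x_0 n$ and shows the ratio $\to 1$ uniformly there, while the tails are negligible. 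The main obstacle I anticipate is making the saddle-point / Laplace estimate rigorous and uniform enough — in particular, controlling the behavior of the integrand on the whole contour (not just near the saddle), verifying the contour can be deformed through the saddle given the poles of $R_n$ coming from the denominator $\prod_j (t+\delta_j n)_{(M-2\delta_j)n+1}^{s/J}$, and confirming that no pole or competing critical point dominates; this is the technical heart and would follow the template of \cite[Lem.~1]{FSZ2019} and the analogous estimates in \cite{Zud2004, LY2020} adapted to the present $\Phi_n$-enriched rational function.
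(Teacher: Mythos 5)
Your overall plan (write $R_n$ via Gamma functions, set $t=xn$, use Stirling/Laplace, find the saddle, then compare $S_{n,1}$ to $S_{n,\theta}$ near the saddle) matches the spirit of the paper's proof. However, there are two concrete problems.

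First, your monotonicity claim for $f$ is incorrect, and the error would break the uniqueness argument. You assert that $f$ is \emph{strictly decreasing} on $(0,+\infty)$ with $f(0^+)=+\infty$ and $f(+\infty)=1$; but a strictly decreasing function tending to $1$ at infinity stays $>1$ and never attains $1$ at a finite point, so there would be no $x_0$. In fact $f$ is \emph{not} monotone here: the first factor $\bigl((2r+1)M+x)/x\bigr)^{|\mathcal{Z}_B|}$ decreases, while the factors $\frac{rM+x}{(r+1)M+x}$ and each $\bigl(\frac{rM+\delta_j+x}{(r+1)M-\delta_j+x}\bigr)^{s/J}$ \emph{increase} to $1$; under condition \eqref{cond_s_B} the exponent $s/J$ dwarfs $|\mathcal{Z}_B|$ once $x$ is moderately large, so the increasing part eventually wins. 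The paper writes $f'(x)/f(x)=u(x)/\bigl(x((2r+1)M+x)\bigr)$ with $u$ increasing, $u(0^+)<0$ and $u(x_2)>0$, so $f$ first decreases to a minimum (below $1$) at some $x_1$, then increases back to $1$ from below. Uniqueness of $x_0$ is because the single crossing of level $1$ happens in the decreasing region, with $f>1$ on $(0,x_0)$ and $f<1$ on $(x_0,+\infty)$. Your heuristic that ``$s/J$ is balanced against $|\mathcal{Z}_B|$'' misses that $s/J$ actually dominates and reverses the monotonicity.

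Second, you never actually show that the critical value of the Stirling profile equals $g(x_0)$; you only assert it. The paper's mechanism is: with $k=\kappa n$, $R_n(rMn+\kappa n+\theta)=n^{O(1)}\bigl(f(\kappa)^\kappa g(\kappa)\bigr)^n$, and the profile $h(x):=f(x)^x g(x)$ satisfies $h'(x)/h(x)=\log f(x)$, so $h$ is maximized exactly where $f=1$, i.e.\ at $x_0$, and then $h(x_0)=f(x_0)^{x_0}g(x_0)=g(x_0)$ \emph{because} $f(x_0)=1$. This identity $h(x_0)=g(x_0)$ is the reason the stated limit is $g(x_0)$ rather than some other combination, and it also powers the ratio result: $R_n(rMn+k+1)/R_n(rMn+k+\theta)=(1+o(1))f(k/n)^{1-\theta}$, which tends to $f(x_0)^{1-\theta}=1$ precisely because $f(x_0)=1$. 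You should make this structure explicit; ``shows the ratio $\to 1$ uniformly'' is not an argument without the $f(x_0)=1$ fact. Finally, the paper works directly with the discrete sum $S_{n,\theta}=\sum_{k\geqslant 0}R_n(rMn+k+\theta)$ and a Laplace-type estimate (as in \cite[Lem.~4.1]{LY2020}), so your proposed contour-integral/N\o rlund--Rice detour, while workable in principle, adds unnecessary technical issues (contour deformation past the poles of the denominator) that the paper avoids.
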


\begin{proof}
We first prove that the equation $f(x)=1$ has a unique positive real solution $x=x_0$. A straightforward calculation gives
\[ \frac{f'(x)}{f(x)} = \frac{u(x)}{x((2r+1)M+x)},  \quad x \in (0,+\infty), \]
where the function $u(x)$ is defined on $(0,+\infty)$ by
\begin{align}
u(x) :=&  -(2r+1)M|\mathcal{Z}_B| + M\left( 1 - \frac{(rM)((r+1)M)}{(rM+x)((r+1)M+x)} \right) \notag\\
&+  \frac{s}{J}\sum_{j=1}^{J} (M-2\delta_j)\left( 1 - \frac{(rM+\delta_j)((r+1)M-\delta_j)}{(rM+\delta_j+x)((r+1)M-\delta_j+x)} \right). \label{def_u}
\end{align}
Noting that $M-2\delta_j \geqslant 1$ and 
\begin{align*}
	1 - \frac{(rM+\delta_j)((r+1)M-\delta_j)}{(rM+\delta_j+x)((r+1)M-\delta_j+x)} &= \frac{x((2r+1)M+x)}{(rM+\delta_j+x)((r+1)M-\delta_j+x)} \\
	&> \frac{x}{(2r+1)M + x}, \quad j=1,2,\ldots,J,
\end{align*}
we have
\begin{equation}\label{u>}
	u(x) >  -(2r+1)M|\mathcal{Z}_B| + s \cdot \frac{x}{(2r+1)M + x}, \quad x \in (0,+\infty). 
\end{equation}
Let 
\[ x_2 := \frac{(2r+1)^2M^2|\mathcal{Z}_B|}{s - (2r+1)M|\mathcal{Z}_B|}, \quad (\text{we have~} x_2 > 0 \text{~by \eqref{cond_s_B} and \eqref{est_Z_B}}) \]
which is the root of the right-hand side of \eqref{u>}. Then $u(x_2)>0$. On the other hand, By \eqref{def_u}, the function $u(x)$ is increasing on $(0,+\infty)$ and $u(0^{+}) = -(2r+1)M|\mathcal{Z}_B| < 0$. Thus, 
there exists a unique $x_1 \in (0,x_2)$ such that $u(x_1) = 0$. Therefore, $f(x)$ is decreasing on $(0,x_1)$ and increasing on $(x_1,+\infty)$. Since $f(0^{+}) = +\infty$ and $f(+\infty) = 1$, there exists a unique $x_0 \in (0,x_1)$ such that $f(x_0) = 1$. Moreover, $f(x) > 1$ for $x \in (0,x_0)$ and $f(x)<1$ for $x \in (x_0,+\infty)$. By \eqref{est_Z_B}, we have 
\begin{equation}\label{x_0_<}
	x_0 < x_2 = \frac{(2r+1)^2M^2|\mathcal{Z}_B|}{s - (2r+1)M|\mathcal{Z}_B|} \leqslant \frac{(2r+1)^2M^2B^2}{s - (2r+1)MB^2}.
\end{equation}

The rest of the proof is similar to that of \cite[Lem. 4.1]{LY2020}. We only give a sketch. Since $R_n(m+\theta) = 0$ for $m=0,1,\ldots,rMn - 1$ and $\theta \in \mathcal{Z}_B$, we have 
\[ S_{n,\theta} = \sum_{k=0}^{+\infty} R_n(rMn+k+\theta). \]
We can express $R_n(rMn+k+\theta)$ almostly as Gamma ratio:  
\begin{align*}
	R_n(rMn+k+\theta) =&  A_1(B)^n A_2(B)^n \cdot \frac{\prod_{j=1}^{J} \left( (M-2\delta_j)n \right)!^{s/J} }{\left( n/\operatorname{den}(r) \right)!^{\operatorname{den}(r)(2r+1)M|\mathcal{Z}_B| - \operatorname{den}(r)M }} \\
	&\times ((2r+1)Mn+2k+2\theta) \cdot \prod_{\theta' \in \mathcal{Z}_B} \frac{\Gamma((2r+1)Mn+k+\theta+\theta')}{\Gamma(k+\theta+\theta')} \\
	&\times \frac{\Gamma\left( rMn + k + \theta \right)}{\Gamma\left( (r+1)Mn + k + 1 + \theta \right)} \cdot \prod_{j=1}^{J} \left( \frac{\Gamma\left( (rM+\delta_j)n + k + \theta \right)}{\Gamma\left( ((r+1)M-\delta_j)n + k + 1 + \theta \right)} \right)^{s/J}.
\end{align*}
Suppose for the moment that $k = \kappa n$ for some constant $\kappa > 0$, then by Stirling's formula we have (as $n \to +\infty$)
\begin{align*}
	R_n(rMn+\kappa n+\theta) &= n^{O(1)} \cdot A_1(B)^n A_2(B)^n \operatorname{den}(r)^{((2r+1)M|\mathcal{Z}_B|-M)n} \left( \prod_{j=1}^{J} (M-2\delta_j)^{(M-2\delta_j)n} \right)^{s/J} \\
	&\times \left( \frac{((2r+1)M+\kappa)^{((2r+1)M+\kappa)n}}{\kappa^{\kappa n}} \right)^{|\mathcal{Z}_B|} \cdot \frac{(rM+\kappa)^{(rM+\kappa)n}}{((r+1)M + \kappa)^{((r+1)M + \kappa)n}} \\
	&\times \prod_{j=1}^{J} \left(  \frac{(rM+\delta_j+\kappa)^{(rM+\delta_j+\kappa)n}}{((r+1)M-\delta_j+\kappa)^{((r+1)M-\delta_j+\kappa)n}}  \right)^{s/J} \\
	&= n^{O(1)} \cdot \left( f(\kappa)^{\kappa}g(\kappa) \right)^n.
\end{align*}
Define the function $h(x):=f(x)^x g(x)$ on $(0,+\infty)$. Then a direct calculation gives $h'(x) / h(x) = \log f(x)$. So $h(x)$ is increasing on $(0,x_0)$ and decreasing on $(x_0,+\infty)$. Based on the above observation, one can argue as in the proof of \cite[Lem. 4.1]{LY2020} to show that $S_{n,\theta} = n^{O(1)} h(x_0)^n$ for any $\theta \in \mathcal{Z}_B$, and hence
\[ \lim_{n \to +\infty} S_{n,\theta}^{1/n} = h(x_0) = g(x_0). \]
Moreover, for any fixed small $\varepsilon_0 > 0$ and any $\theta \in \mathcal{Z}_B$, we have
\begin{align*}
S_{n, 1}&=(1+o(1)) \sum_{\left(x_0-\varepsilon_0\right) n \leqslant k \leqslant\left(x_0+\varepsilon_0\right) n} R_n\left(m_1 n+k+1\right), \\
S_{n, \theta}&=(1+o(1)) \sum_{\left(x_0-\varepsilon_0\right) n \leqslant k \leqslant\left(x_0+\varepsilon_0\right) n} R_n\left(m_1 n+k+\theta\right).
\end{align*}
Then, by using $\Gamma(x+1-\theta)/\Gamma(x) = (1+o_{x \to +\infty}(1)) x^{1-\theta}$, we obtain uniformly for $\left(x_0-\varepsilon_0\right) n \leqslant k \leqslant\left(x_0+\varepsilon_0\right) n$ that, as $n \to +\infty$,
\[ \frac{R_n\left( rMn+k+1\right)}{R_n\left( rMn+k+\theta\right)}=(1+o(1)) f(k/n)^{1-\theta}. \]
Therefore,
\[ f\left(x_0+\varepsilon_0\right)^{1-\theta} \leqslant \liminf _{n \rightarrow+\infty} \frac{S_{n, 1}}{S_{n, \theta}} \leqslant \limsup _{n \rightarrow+\infty} \frac{S_{n, 1}}{S_{n, \theta}} \leqslant f\left(x_0-\varepsilon_0\right)^{1-\theta}. \]
Letting $\varepsilon_0 \to 0^{+}$, we obtain
\[ \lim_{n \to +\infty} \frac{S_{n,1}}{S_{n,\theta}}  = 1.  \]
\end{proof}

Next, we estimate the factor $\Phi_n$ (defined by \eqref{defi_Phi}). It is well known that the prime number theorem implies 
\begin{equation}\label{PNT}
	D_m =e^{m+o(m)}, \quad \text{as~} m \to +\infty. 
\end{equation}
The following lemma is also a corollary of the prime number theorem. For details, we refer the reader to \cite[Lem. 4.4]{Zud2002}.
\begin{lemma}\label{est_Phi_n}
We have
	\[ \Phi_n = \exp\left( \varpi n + o(n) \right), \quad \text{as~} n \to +\infty,  \]
where the constant 
\begin{equation}\label{defi_varpi}
	\varpi = \int_{0}^{1} \omega(x)~\mathrm{d}\psi(x) - \int_{0}^{1/(M-2\delta_1)} \omega(x)~\frac{\mathrm{d}x}{x^2}.  
\end{equation}
The function $\psi(\cdot)$ is the digamma function, and the function $\omega(\cdot)$ is given by \eqref{defi_omega}.
\end{lemma}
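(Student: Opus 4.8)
The plan is to prove Lemma~\ref{est_Phi_n} by reducing the asymptotics of $\log \Phi_n$ to a Riemann--Stieltjes integral via the prime number theorem, exactly in the spirit of \cite[Lem.~4.4]{Zud2002}. By definition \eqref{defi_Phi},
\[
\log \Phi_n = \sum_{\sqrt{Mn} < p \leqslant (M-2\delta_1)n} \omega(n/p) \log p,
\]
where the sum is over primes $p$. First I would record the elementary properties of the auxiliary function $\omega(\cdot)$: from \eqref{defi_omega}, $\omega(x)$ is a bounded, non-negative, integer-valued step function on $[0,\infty)$ which is periodic-like in the sense that it depends only on the fractional parts $\{(M-2\delta_j)x\}$ and the position of $y$, and in particular $\omega(x)=0$ for $x$ in a neighborhood of $0$ (since for $x$ small enough all the floor terms vanish). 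The jumps of $\omega$ occur only at rationals with bounded denominator, so $\omega$ is Riemann--Stieltjes integrable against both $d\psi(x)$ and $dx/x^2$ on the relevant ranges, and the integrals in \eqref{defi_varpi} converge (the lower integral is harmless near $0$ because $\omega$ vanishes there).

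Next I would split the prime sum according to the size of $p$ relative to $n$. Writing $p = n/x$, a prime $p \in (\sqrt{Mn}, (M-2\delta_1)n]$ corresponds to $x = n/p \in [1/(M-2\delta_1), \sqrt{n/M})$. The main contribution comes from primes $p$ of order $n$, i.e.\ $x$ bounded; primes much smaller than $n$ (i.e.\ $x$ large) are controlled by noting $\omega$ is bounded and by the crude bound $\sum_{p \leqslant z}\log p = O(z)$, which shows the tail $\sqrt{Mn} < p \leqslant n^{1-\epsilon}$ contributes only $o(n)$. For the main range I would fix a large parameter $T$, partition $[1/(M-2\delta_1), T]$ into subintervals on which $\omega$ is constant (using that $\omega$ has finitely many jumps there), and on each such subinterval $[\alpha,\beta)$ with $\omega \equiv c$ the corresponding partial sum is $c \cdot \bigl(\vartheta(n/\alpha) - \vartheta(n/\beta)\bigr)$ where $\vartheta(z) = \sum_{p \leqslant z}\log p$. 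By the prime number theorem $\vartheta(z) = z + o(z)$, so this equals $c\,n\,(1/\alpha - 1/\beta) + o(n) = n\int_\alpha^\beta \omega(x)\,\frac{dx}{x^2} + o(n)$. Summing over the subintervals and letting $T \to \infty$ (the contribution of $x > T$ being $O(n/T)$ uniformly) gives
\[
\log\Phi_n = n\int_{1/(M-2\delta_1)}^{\infty} \omega(x)\,\frac{dx}{x^2} + o(n).
\]

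Finally I would reconcile this with the stated form \eqref{defi_varpi}. The identity to invoke is the integral representation of the digamma function obtained from the prime number theorem: $D_m = \mathrm{LCM}\{1,\dots,m\} = e^{m + o(m)}$ refines, when weighted by a step function, to $\int_0^1 \omega(x)\,d\psi(x)$ as the ``expected'' main term, and the difference between the naive count over all primes $p \leqslant (M-2\delta_1)n$ and the restricted count $p > \sqrt{Mn}$ is precisely the $\int_0^{1/(M-2\delta_1)}\omega(x)\,dx/x^2$ correction once one changes variables $x = n/p \leftrightarrow$ the digamma integral over $[0,1]$; this is the bookkeeping carried out in \cite[Lem.~4.4]{Zud2002}, and I would simply cite it, checking that our $\omega$ satisfies the hypotheses there (boundedness, vanishing near $0$, finitely many jumps all at bounded-denominator rationals). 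The main obstacle is not conceptual but organizational: making the passage from the prime sum to the Stieltjes integral uniform in the truncation parameter $T$, and correctly matching the digamma-integral normalization in \eqref{defi_varpi} with the elementary $dx/x^2$ integral coming from $\vartheta(z) \sim z$ — a sign/range bookkeeping step that is routine but easy to get wrong. Since the paper explicitly defers to \cite[Lem.~4.4]{Zud2002}, I would keep the argument at the level of a sketch, emphasizing only the verification that $\omega$ from \eqref{defi_omega} has the structural properties that lemma requires.
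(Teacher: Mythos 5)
Your proposal matches the paper's approach, which consists solely of a citation to \cite[Lem.~4.4]{Zud2002}, and your sketch of the PNT-based reduction behind that lemma is correct. One point worth making explicit, which you only gesture at with ``periodic-like'': the function $\omega$ is genuinely $1$-periodic (since $M-2\delta_j$, $\delta_j$, $M-\delta_j$ are integers, all three floor terms shift by integers under $x\mapsto x+1$ and those integers cancel), and this periodicity together with $\psi'(x)=\sum_{k\geqslant 0}(x+k)^{-2}$ is exactly what turns your intermediate expression $\int_{1/(M-2\delta_1)}^{\infty}\omega(x)\,\mathrm{d}x/x^{2}$ into the form $\int_{0}^{1}\omega(x)\,\mathrm{d}\psi(x)-\int_{0}^{1/(M-2\delta_1)}\omega(x)\,\mathrm{d}x/x^{2}$ of \eqref{defi_varpi}.
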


\section{Elimination procedure}\label{sec_5}

We use the elimination technique of Fischler-Sprang-Zudilin \cite{FSZ2019} to prove the following. 

\begin{lemma}\label{eli}
If 
\begin{equation}\label{we_need}
\log (g(x_0)) + s \cdot \left( -\frac{\varpi}{J} + (M-2\delta_1) \right) < 0,    
\end{equation}  
then there are at least $|\Psi_B| - 1$ irrational numbers among $\zeta(3)$, $\zeta(5)$, $\ldots$, $\zeta(s-1)$.
\end{lemma}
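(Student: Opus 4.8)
\textbf{Proof proposal for Lemma \ref{eli}.}

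The plan is to combine the linear forms $S_{n,\theta}$ for $\theta$ running over the "extremal" fractions $1/b, 2/b, \ldots, b/b$ ($b \in \Psi_B$) into a smaller number of cleaned-up linear forms in $1$ and odd zeta values, using the Fischler–Sprang–Zudilin elimination idea, and then run the standard Nesterenko-type linear independence criterion on them. First I would observe, via \eqref{prop_Psi_B}, that for each $b \in \Psi_B$ the full set $\{j/b : 1 \leqslant j \leqslant b\}$ lies in $\mathcal{Z}_B$, so all the forms $S_{n,j/b}$ are available. Summing over $j$ and using the distribution relation $\sum_{j=1}^{b} \zeta(i, j/b) = b^{i}\zeta(i)$ (together with $\sum_{j=1}^b \zeta(i,j/b)$ collapsing the Hurwitz values into $\zeta(i)$), the combination $\sum_{j=1}^{b} S_{n,j/b}$ becomes a genuine $\mathbb{Q}$-linear form in $1, \zeta(3), \zeta(5), \ldots, \zeta(s-1)$; crucially the coefficient of $\zeta(i)$ is $b^{i}\rho_{n,i}$ and the constant term is $\sum_{j=1}^{b}\rho_{n,0,j/b}$, which by Lemmas \ref{arith_rho_i} and \ref{arith_rho_0_theta} has controlled denominators — note here that the troublesome term $\rho_{n,0,1}$ appears only once (for $j=b$) and the normalization factors $A_1(B)^n, A_2(B)^n$ built into $R_n$ were designed precisely to absorb the extra primes $b$ and $p\mid b$ that these $b^i$ factors introduce. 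This is where the definition of $A_1(B), A_2(B)$ and of $P_{B,\operatorname{den}(r)}$ pays off: after multiplying by $\Phi_n^{-s/J} D_{(M-2\delta_1)n}^{s}$ the resulting coefficients are integers.

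Next I would eliminate the single bad constant term. Following \cite[§5]{FSZ2019}, one takes two consecutive (or suitably chosen) indices $n$ and forms an integer linear combination of the corresponding cleaned forms that kills $\rho_{n,0,1}$ — more precisely, one works with the forms indexed by all $b \in \Psi_B$ simultaneously and eliminates one degree of freedom, so that out of $|\Psi_B|$ worth of data one obtains linear forms in the $|\Psi_B|$ "coordinates" $1, \zeta(3), \ldots$ but with one relation removed, leaving effectively $|\Psi_B|-1$ independent irrationality statements. Concretely, the output is a family of linear forms $L_{n} = \sum_{i} c_{n,i}\,\zeta(i) + c_{n,0}$ (odd $i \in [3,s-1]$), with integer coefficients after scaling by $\Phi_n^{-s/J} D_{(M-2\delta_1)n}^{s}$, whose size is governed by $\max_\theta |S_{n,\theta}|$, i.e. by $g(x_0)^n$ up to subexponential factors (Lemma \ref{lem_ana}), and whose coefficients are bounded by $e^{(M-2\delta_1)sn + o(n)}$ from \eqref{PNT} times $\Phi_n^{s/J} = e^{\varpi s n/J + o(n)}$ from Lemma \ref{est_Phi_n}.

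The arithmetic then reads: the linear form $\Phi_n^{-s/J} D_{(M-2\delta_1)n}^{s} L_n$ has integer coefficients and its absolute value is $\exp\bigl((\log g(x_0))n + o(n)\bigr) \cdot \exp\bigl(-\varpi s n / J + o(n)\bigr) \cdot \exp\bigl((M-2\delta_1)sn + o(n)\bigr)$, which tends to $0$ exactly under hypothesis \eqref{we_need}. Since this holds for a positive-density (indeed all sufficiently large) sequence of $n \in P_{B,\operatorname{den}(r)}\mathbb{N}$, and since a nonzero integer linear form evaluated at rationals would be bounded below, one concludes — via the Nesterenko/FSZ linear-independence-measure argument — that at least $|\Psi_B|-1 = |\Psi_B|-1$ of the numbers $1, \zeta(3), \zeta(5), \ldots, \zeta(s-1)$ are $\mathbb{Q}$-linearly independent from the rest in a way that forces $|\Psi_B|-1$ irrationals; recalling $|\Psi_B|-1$ is the advertised count, the lemma follows. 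The main obstacle I anticipate is verifying non-vanishing: one must check that the cleaned linear forms $L_n$ are not identically zero (and that the elimination step does not accidentally annihilate everything), which in the FSZ framework is handled by a rank/genericity argument on the matrix of coefficients $(b^i \rho_{n,i})_{b,i}$ — here a Vandermonde-type non-degeneracy in the variables $b$ — combined with the fact that $S_{n,\theta} \neq 0$ for large $n$ (immediate from the asymptotics $S_{n,\theta} = n^{O(1)} g(x_0)^n$ with $g(x_0) \neq 0$). A secondary technical point is bookkeeping the denominators when the $b^i$ factors interact with $D_{(M-2\delta_1)n}$ and $\Phi_n$, but this is exactly what the normalization constants $A_1(B), A_2(B)$ were constructed to neutralize, so it should go through as in \cite{LY2020}.
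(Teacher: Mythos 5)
Your high-level plan matches the paper: sum $S_{n,k/b}$ over $k=1,\ldots,b$ using \eqref{prop_Psi_B} and $\sum_{k=1}^{b}\zeta(i,k/b)=b^{i}\zeta(i)$ to obtain $\mathbb{Q}$-linear forms in $1,\zeta(3),\ldots,\zeta(s-1)$, combine over $b\in\Psi_B$ to eliminate unwanted terms, and read off the contradiction from $\log g(x_0)+s(-\varpi/J+(M-2\delta_1))<0$ together with \eqref{PNT}, Lemma~\ref{est_Phi_n} and Lemma~\ref{lem_ana}. The size calculation you propose for the scaled linear form is exactly right. However, two specific points in your write-up are wrong or missing, and both matter.

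First, the mechanism for killing the ``bad'' constant $\rho_{n,0,1}$ is not to combine two different indices $n$. The paper never mixes different $n$. Rather, since $\theta=b/b=1$ contributes $\rho_{n,0,1}$ once for each $b$, the total contribution of $\rho_{n,0,1}$ to $\sum_{b}w_b\sum_{k=1}^{b}S_{n,k/b}$ is $\rho_{n,0,1}\sum_b w_b$; this is annihilated by imposing the single condition $\sum_{b\in\Psi_B}w_b=0$, i.e.\ the row $i=0$ of the generalized Vandermonde system $[\,b^{i}\,]_{b\in\Psi_B,\;i\in\{0,1\}\cup I}$. So the elimination of $\rho_{n,0,1}$ and the elimination of the rational $\zeta(i)$'s happen simultaneously, by one choice of weights $w_b$, at a single $n$.

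Second, the logical frame should be a proof by contradiction, not a direct appeal to a Nesterenko-type criterion. One assumes there are fewer than $|\Psi_B|-1$ irrationals, picks $I\subset\{3,5,\ldots,s-1\}$ with $|I|=|\Psi_B|-2$ containing all the (possibly) irrational indices, and lets $A$ be a common denominator of the rational $\zeta(i)$, $i\notin I$. Vandermonde invertibility then gives integer weights $w_b$ with $\sum_b w_b b^{i}=0$ for $i\in\{0\}\cup I$ and $\sum_b w_b b\neq 0$. The resulting $\widetilde S_n=\sum_b w_b\sum_{k=1}^{b}S_{n,k/b}$ has, by Lemmas~\ref{arith_rho_i} and~\ref{arith_rho_0_theta}, the property that $A\,\Phi_n^{-s/J}D_{(M-2\delta_1)n}^{s}\widetilde S_n\in\mathbb{Z}$; it is nonzero for large $n$ simply because $\widetilde S_n=(\sum_b w_b b+o(1))\,S_{n,1}$ by the last statement of Lemma~\ref{lem_ana}, not by a ``rank/genericity'' argument on $(b^{i}\rho_{n,i})$; and hypothesis \eqref{we_need} forces this nonzero integer sequence to tend to $0$ in absolute value, a contradiction. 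As written, your proposal gestures at these ingredients without making the contradiction argument explicit, and the ``two consecutive $n$'' device would not produce the needed cancellation of $\rho_{n,0,1}$.
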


\begin{proof}
Suppose that the number of irrationals among $\zeta(3)$, $\zeta(5)$, $\ldots$, $\zeta(s-1)$ is less than $|\Psi_B| - 1$; then we can take a subset $I \subset \{ 3,5,\ldots,s-1 \}$ with $|I|= |\Psi_B| -2$ such that $\zeta(i) \in \mathbb{Q}$ for all $i \in \{ 3,5,\ldots,s-1 \} \setminus I$. Let $A \in \mathbb{N}$ be a common denominator of these rational zeta values; that is, $A \cdot \zeta(i) \in \mathbb{Z}$ for all $i \in \{ 3,5,\ldots,s-1 \} \setminus I$.

Since the generalized Vandermonde matrix
\[  \left[  b^{i} \right]_{b \in \Psi_B,~i \in \{ 0,1 \} \cup I}  \]
is invertible (see \cite[\S 4]{FSZ2019}), there exist integers $w_b \in \mathbb{Z}$ ($b \in \Psi_B$) such that 
\begin{align*}
\sum_{b \in \Psi_B} w_b b^{i} &= 0 \quad \text{for any~} i \in \{ 0 \}  \cup I, \text{~and} \\
\sum_{b \in \Psi_B} w_b b  &\neq 0.
\end{align*}

By \eqref{prop_Psi_B}, we have $k/b \in \mathcal{Z}_B$ for any $b \in \Psi_B$ and any $k =1,2,\ldots,b$; thus $S_{n,k/b}$ is defined by Definition \ref{def_S_n_theta}. For any $n \in P_{B,\operatorname{den}(r)}\mathbb{N} $ with $n>s^2$, we define 
\[ \widetilde{S}_{n} := \sum_{b \in \Psi_B} w_b \sum_{k=1}^{b} S_{n,k/b}.  \]
By Lemma \ref{lem_lin} and the fact
\[ \sum_{k=1}^{b} \zeta(i,k/b) = b^{i} \zeta(i), \]
we have
\begin{align*}
\widetilde{S}_{n} &= \sum_{b \in \Psi_B} w_b \sum_{k=1}^{b} \rho_{n,0,k/b} + \sum_{i \in \{3,5,\ldots,s-1\}} \left( \sum_{b \in \Psi_B} w_b b^{i} \right)\rho_{n,i}\zeta(i) \\
&= \sum_{b \in \Psi_B} w_b \sum_{k=1}^{b-1} \rho_{n,0,k/b} + \sum_{i \in \{3,5,\ldots,s-1\}\setminus I} \left( \sum_{b \in \Psi_B} w_b b^{i} \right)\rho_{n,i}\zeta(i).
\end{align*}
By Lemma \ref{lem_ana}, we have 
\[ \widetilde{S}_{n} = \left( \sum_{b \in \Psi_B} w_b b + o(1) \right)S_{n,1}, \quad \text{as~} n \to +\infty. \]

By Lemma \ref{arith_rho_i} and Lemma \ref{arith_rho_0_theta}, we have
\[ A \cdot \Phi_n^{-s/J}  D_{(M-2\delta_1) n}^{s} \cdot \widetilde{S}_{n} \in \mathbb{Z}. \]
On the other hand, by Eq. \eqref{PNT}, Lemma \ref{est_Phi_n}, Lemma \ref{lem_ana} and Eq. \eqref{we_need}, we have
\[ 0 < \lim_{n \to +\infty} \left|   A \cdot \Phi_n^{-s/J} D_{(M-2\delta_1) n}^{s} \cdot \widetilde{S}_{n} \right|^{1/n} = g(x_0)\exp\left( s \cdot \left( -\frac{\varpi}{J} + (M-2\delta_1) \right) \right) < 1,  \]
a contradiction.
\end{proof}

\bigskip

The next theorem simplifies our task by transforming it into a computational problem.

\begin{theorem}\label{final_lemma}
Fix any positive integers $M$ and $J$, fix any non-negative integers $\delta_1,\delta_2,\ldots,\delta_J$ such that 
\[ 0 \leqslant \delta_1 \leqslant \delta_2 \leqslant \cdots \leqslant \delta_J < \frac{M}{2}. \]
Let $\varpi$ be the real number determined by $M,\delta_1,\delta_2,\ldots,\delta_J$ as follows:
\[ \varpi = \int_{0}^{1} \omega(x)~\mathrm{d}\psi(x) - \int_{0}^{1/(M-2\delta_1)} \omega(x)~\frac{\mathrm{d}x}{x^2},  \]
where $\psi(x) = \Gamma'(x)/\Gamma(x)$ is the digamma function, and
\[ \omega(x) = \min_{0 \leqslant y < 1} \sum_{j=1}^{J}  \left( \lfloor(M-2\delta_j)x\rfloor - \lfloor y-\delta_j x \rfloor - \lfloor (M-\delta_j)x - y \rfloor \right). \]
Let $r_0 \in (-\delta_1/M, +\infty)$ be the real number determined by $M,\delta_1,\delta_2,\ldots,\delta_J$ as follows:
\begin{align}
&\sum_{j=1}^{J} (M-2\delta_j) \Big( \log((r_0+1)M-\delta_j) + \log(r_0M+\delta_j) \Big)  \notag\\
=& -2\varpi + 2J(M-2\delta_1) + 2\sum_{j=1}^{J}  (M-2\delta_j)\log(M-2\delta_j). \label{def_r_0}
\end{align}
If $r_0 > 0$, then we have
\[ \# \left\{  i \in \{3,5,\ldots,s-1\}  \mid  \zeta(i) \not\in \mathbb{Q} \right\}  \geqslant (C_0-o(1)) \cdot \sqrt{\frac{s}{\log s}},  \]
as the even integer $s \to +\infty$, where the constant 
\[ C_0 = \sqrt{\frac{2\zeta(2)\zeta(3)}{\zeta(6)} \cdot \frac{1}{J}\sum_{j=1}^{J} \log \frac{(r_0+1)M - \delta_j}{r_0M + \delta_j}  }. \]
\end{theorem}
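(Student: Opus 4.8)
The plan is to derive Theorem \ref{final_lemma} from Lemma \ref{eli} by choosing the free parameters $r$ and $B$ appropriately and then optimizing. First I would take $r$ to be a rational number sufficiently close to $r_0$; the definition \eqref{def_r_0} of $r_0$ is precisely the condition that makes the quantity on the left-hand side of the inequality \eqref{we_need} in Lemma \ref{eli} vanish to leading order in $s$ once the dominant contributions are isolated. So the first concrete step is to expand $\log(g(x_0))$ asymptotically as $s\to+\infty$ with $B=c\sqrt{s/\log s}$. By \eqref{x_0_<} we have $x_0 = O(B^2/s) = O(1/\log s) \to 0$, so in the formula for $g(x)$ from Lemma \ref{lem_ana} I can replace $x_0$ by $0$ up to negligible error, except in the term $((2r+1)M+x_0)^{(2r+1)M|\mathcal{Z}_B|}$ where the exponent is of size $\sim B^2$. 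A more careful analysis of that term is needed: writing $f(x_0)=1$ and taking logarithms gives a relation that pins down $x_0$ and lets one evaluate $|\mathcal{Z}_B|\log((2r+1)M+x_0) + |\mathcal{Z}_B|\log((2r+1)M) $-type combinations.

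Next I would assemble $\log(g(x_0))$ using \eqref{est_A_1}, \eqref{est_A_2}, and \eqref{est_Z_B}. The leading term of $\log A_1(B)$ is $\tfrac12 \frac{\zeta(2)\zeta(3)}{\zeta(6)}(2r+1)M B^2\log B$, which is of order $B^2\log B \sim s$; the term $\log A_2(B)$ is $O(B^2(\log\log B)^2) = o(s)$ and hence negligible; the term $\operatorname{den}(r)^{(2r+1)M|\mathcal{Z}_B|-M}$ contributes $O(|\mathcal{Z}_B|) = O(B^2) = o(s)$. The genuinely $s$-sized contributions are: the $\tfrac{s}{J}\log$-terms in $g$, namely $\frac{s}{J}\sum_j\big((M-2\delta_j)\log(M-2\delta_j) + (rM+\delta_j)\log(rM+\delta_j) - ((r+1)M-\delta_j)\log((r+1)M-\delta_j)\big)$ (after letting $x_0\to 0$); the $\log A_1$ term; and the term from $((2r+1)M+x_0)^{(2r+1)M|\mathcal{Z}_B|}$. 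I expect the $\log A_1$ contribution and the $((2r+1)M+x_0)$ contribution to partially cancel against each other and against the $s\log s$ normalization hidden in $B=c\sqrt{s/\log s}$ (since $B^2\log B = \tfrac{c^2 s}{\log s}\cdot\tfrac12(\log s - \log\log s + 2\log c) \sim \tfrac{c^2 s}{2}$). Collecting everything, the left side of \eqref{we_need} should take the form $s\cdot\big( P(r) - Q(c) \big) + o(s)$, where $P(r)$ is a function of $r,M,\delta_j,\varpi$ and $Q(c)$ is an increasing function of the normalization constant $c$; the defining equation \eqref{def_r_0} ensures $P(r_0)=0$, and then any $c>0$ makes \eqref{we_need} hold for $r$ close enough to $r_0$ and $s$ large.

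With \eqref{we_need} verified, Lemma \ref{eli} gives at least $|\Psi_B|-1$ irrationals among $\zeta(3),\ldots,\zeta(s-1)$. By \eqref{est_Psi_B} this is $\big(\tfrac{\zeta(2)\zeta(3)}{\zeta(6)} + o(1)\big)B = \big(\tfrac{\zeta(2)\zeta(3)}{\zeta(6)} + o(1)\big)c\sqrt{s/\log s}$. Since this holds for \emph{every} $c>0$ for which the (strict) inequality \eqref{we_need} can be maintained, I would push $c$ up to its supremum $c_{\max}$, the value at which $Q(c_{\max}) = P(r_0) = 0$ becomes borderline — equivalently, solve $Q(c_{\max})=0$ for $c_{\max}$. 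A routine computation of $Q$ should give $c_{\max}^2 = \tfrac{2}{J}\sum_j\big(\log((r_0+1)M-\delta_j) - \log(r_0 M + \delta_j)\big)$ times the appropriate zeta-constant reciprocal, so that $\tfrac{\zeta(2)\zeta(3)}{\zeta(6)}\,c_{\max} = \sqrt{\tfrac{2\zeta(2)\zeta(3)}{\zeta(6)}\cdot\tfrac1J\sum_j\log\frac{(r_0+1)M-\delta_j}{r_0 M+\delta_j}} = C_0$. Taking $c\nearrow c_{\max}$ along a sequence and using the $o(1)$ slack yields the stated bound $(C_0-o(1))\sqrt{s/\log s}$; a standard diagonal/limiting argument handles the fact that each fixed $c<c_{\max}$ only gives $(\tfrac{\zeta(2)\zeta(3)}{\zeta(6)}c - o(1))\sqrt{s/\log s}$.

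The main obstacle I anticipate is the bookkeeping in the asymptotic expansion of $\log(g(x_0))$: one must track which terms are $\Theta(s)$, which are $o(s)$, and in particular handle the interaction between $x_0\to 0$, the large exponent $(2r+1)M|\mathcal{Z}_B|\sim B^2$, and the precise value of $|\mathcal{Z}_B|$ (only bounded above by $B^2$ in \eqref{est_Z_B}, not pinned down), as well as the $\log B$ versus $\log s$ discrepancy coming from $B = c\sqrt{s/\log s}$. Getting the constant $C_0$ exactly right — rather than just the shape of the bound — hinges on making the cancellation between the $\log A_1(B)$ term and the $f(x_0)=1$ relation completely explicit, and on verifying that the genuinely negligible terms ($A_2$, $\operatorname{den}(r)$, polynomial factors $n^{O(1)}$, the difference $|\Psi_B|-1$ versus $|\Psi_B|$) really are negligible. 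Everything else is a direct substitution into Lemma \ref{eli}.
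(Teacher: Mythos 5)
Your overall plan (apply Lemma \ref{eli} with $B=c\sqrt{s/\log s}$, expand $\log g(x_0)$, optimize over $r$ and $c$) matches the paper's, and the classification of which contributions to $\log g(x_0)$ are $\Theta(s)$ and which are $o(s)$ is in the right spirit. But there is a genuine gap in the optimization step, centered on a misreading of the role of \eqref{def_r_0}.

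You claim that \eqref{def_r_0} is ``precisely the condition that makes the quantity on the left-hand side of \eqref{we_need} vanish to leading order,'' i.e., that $P(r_0)=0$, and that consequently ``any $c>0$ makes \eqref{we_need} hold.'' This cannot be right: if it were, Lemma \ref{eli} together with \eqref{est_Psi_B} would give at least $\bigl(\tfrac{\zeta(2)\zeta(3)}{\zeta(6)}c - o(1)\bigr)\sqrt{s/\log s}$ irrationals for \emph{every} $c>0$, which is absurd for large $c$ since there are only about $s/2$ odd zeta values below $s$. Your third paragraph then contradicts the first by positing a finite $c_{\max}$, and the equation ``$Q(c_{\max})=P(r_0)=0$,'' with $Q$ increasing from $0$, would force $c_{\max}=0$ rather than the positive value you quote; that value appears to be reverse-engineered from the theorem's conclusion rather than derived from your framework. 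There is also a sign slip: the $c$-dependent term in the limit of $\log g(x_0)/s$ enters with a \emph{plus} sign (it comes from $\log A_1(B)\sim \tfrac12\tfrac{\zeta(2)\zeta(3)}{\zeta(6)}(2r+1)M B^2\log B$), so \eqref{we_need} holds for \emph{small} $c$, not large $c$.

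The correct role of \eqref{def_r_0} is as an optimality condition. After substituting $B=c\sqrt{s/\log s}$, the left-hand side of \eqref{we_need} is $s\bigl(\tfrac{\zeta(2)\zeta(3)}{4\zeta(6)}(2r+1)Mc^2 - (2r+1)MF(r)\bigr)+o(s)$ for an explicit $F(r)$, so \eqref{we_need} holds for large $s$ iff $c^2<\tfrac{4\zeta(6)}{\zeta(2)\zeta(3)}F(r)$; the task is then to choose $r$ to maximize $F(r)$. (Incidentally, the factor $((2r+1)M+x_0)^{(2r+1)M|\mathcal{Z}_B|}$ that worried you contributes only $O(|\mathcal{Z}_B|)=O(s/\log s)=o(s)$ to the logarithm since $\log((2r+1)M+x_0)$ stays bounded, so the exact value of $|\mathcal{Z}_B|$ is irrelevant; only $\log A_1(B)$ produces a $\Theta(s)$ term depending on $c$.) Equation \eqref{def_r_0} is exactly the critical-point equation $F'(r_0)=0$: one shows $(2r+1)^2MJ\cdot F'(r)$ is strictly decreasing on $(-\delta_1/M,\infty)$ from $+\infty$ to $-\infty$, so $r_0$ exists, is unique, and is the global maximizer of $F$, with $F(r_0)>0$ (not $0$). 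Substituting \eqref{def_r_0} into $F(r_0)$ simplifies it to $F(r_0)=\tfrac{1}{2J}\sum_j\log\tfrac{(r_0+1)M-\delta_j}{r_0M+\delta_j}$, whence $c_{\max}^2=\tfrac{4\zeta(6)}{\zeta(2)\zeta(3)}F(r_0)$ and $\tfrac{\zeta(2)\zeta(3)}{\zeta(6)}c_{\max}=C_0$. With this correction, your remaining steps (take $r\in\mathbb{Q}_{>0}$ near $r_0$, $c$ slightly below $c_{\max}$, invoke Lemma \ref{eli} and \eqref{est_Psi_B}, let $\varepsilon\to 0$) coincide with the paper's.
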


\begin{proof}
Let $s$ be a sufficiently large multiple of $2J$. Let $B = c \sqrt{s/\log s}$ for some constant $c>0$. Let $r \in \mathbb{Q}_{>0}$. By the definition of $g(x_0)$ (see Lemma \ref{lem_lin}) and Eqs. \eqref{est_Z_B}\eqref{est_A_1}\eqref{est_A_2}\eqref{x_0_<}, we have
\begin{align*}
&\lim_{s \to +\infty \atop B=c\sqrt{s/\log s}} \frac{\log(g(x_0))}{s} \\ 
=&~ \frac{1}{2}\frac{\zeta(2)\zeta(3)}{\zeta(6)} (2r+1)M \cdot \frac{c^2}{2} + \frac{1}{J} \sum_{j=1}^{J} (M-2\delta_j)\log(M-2\delta_j) \\
&+ \frac{1}{J} \sum_{j=1}^{J} \Big( (rM+\delta_j)\log(rM+\delta_j) -  \big((r+1)M-\delta_j \big)\log\big((r+1)M-\delta_j \big)  \Big).
\end{align*}
If the constant $c$ satisfies
\begin{equation}\label{we_need_for_c}
c^2 < \frac{4\zeta(6)}{\zeta(2)\zeta(3)} \cdot F(r), 
\end{equation}
where
\begin{align*}
F(r):= \frac{1}{(2r+1)MJ} \cdot \Bigg(  & \sum_{j=1}^{J}  \big((r+1)M-\delta_j\big)\log\big((r+1)M-\delta_j\big) \\
&- \sum_{j=1}^{J} (rM+\delta_j)\log(rM+\delta_j)  \\
&- \sum_{j=1}^{J}  (M-2\delta_j)\log(M-2\delta_j) \\
&+\varpi - J(M-2\delta_1)  \Bigg),  
\end{align*}
then \eqref{we_need} is satisfied when $s$ is sufficiently large, and hence
\begin{equation}\label{lower_bound}
\# \left\{  i \in \{3,5,\ldots,s-1\}  \mid  \zeta(i) \not\in \mathbb{Q} \right\}  \geqslant  (1+o(1)) \frac{\zeta(2)\zeta(3)}{\zeta(6)} \cdot c \cdot \sqrt{\frac{s}{\log s}}, 
\end{equation}
as $s \to +\infty$, by Lemma \ref{eli} and Eq. \eqref{est_Psi_B}.

So we want to maximize $F(r)$. Note that
\[ F'(r) = \frac{1}{(2r+1)^2 MJ} \cdot G(r),  \]
where 
\begin{align*}
	G(r) := &-2\varpi +2J(M-2\delta_1) + 2\sum_{j=1}^{J} (M-2\delta_j)\log(M-2\delta_j) \\
	&- \sum_{j=1}^{J} (M-2\delta_j)\Big( \log((r+1)M-\delta_j) + \log(rM+\delta_j) \Big).
\end{align*}
Obviously, $G(r)$ is decreasing on $(-\delta_1/M, +\infty)$. Since $G((-\delta_1/M)^+) = +\infty$ and $G(+\infty) = -\infty$, there exists a unique $r_0 \in (-\delta_1/M, +\infty)$ satisfying \eqref{def_r_0}; that is, $G(r_0)=0$. So $F(r_0)$ is the maximum value for $F(r)$. We have $F(r_0)>0$ because $F(r)$ is decreasing on $(r_0,+\infty)$ and $F(+\infty) = 0$.

Fix any small $\varepsilon > 0$. If $r_0 > 0$, then we can take $r \in \mathbb{Q}_{>0}$ sufficiently close to $r_0$, such that $F(r) > (1-\varepsilon/2) F(r_0)$. Take 
\[ c = \sqrt{ \left( 1-\frac{\varepsilon}{2} \right) \cdot \frac{4\zeta(6)}{\zeta(2)\zeta(3)} \cdot F(r_0) }, \]
then \eqref{we_need_for_c} is satisfied, and hence by \eqref{lower_bound} we have 
\begin{align*}
\# \left\{  i \in \{3,5,\ldots,s-1\}  \mid  \zeta(i) \not\in \mathbb{Q} \right\}  &\geqslant \left( 1-\frac{\varepsilon}{2} \right) \sqrt{ \frac{4\zeta(2)\zeta(3)}{\zeta(6)} F(r_0) } \cdot \sqrt{\frac{s}{\log s}}  \\
&= \left( 1-\frac{\varepsilon}{2} \right) C_0 \cdot \sqrt{\frac{s}{\log s}}
\end{align*}
for any sufficiently large $s$ which is a multiple of $2J$. Thus,
\[ \# \left\{  i \in \{3,5,\ldots,s-1\}  \mid  \zeta(i) \not\in \mathbb{Q} \right\}  \geqslant (1-\varepsilon)C_0 \cdot \sqrt{\frac{s}{\log s}} \]
for any sufficiently large even integer $s$.

\end{proof}

\section{Computations}\label{sec_6}

In this section, we will take explicit parameters $M$, $\delta_1,\delta_2,\ldots,\delta_J$ in Theorem \ref{final_lemma} and calculate the corresponding $\varpi$, $r_0$ and $C_0$. We refer the reader to \cite[\S 5]{LZ2022} for the method of calculating $\varpi$ by \texttt{MATLAB}.

If we take $M=1$, $J=1$, $\delta_1 = 0$, then $\varpi = 0$, $r_0 = 2.263884\ldots$ and $C_0 = 1.192507\ldots$. Thus, we rediscover the result in \cite{LY2020}: for any sufficiently large even integer $s$, 
\[ \# \left\{  i \in \{3,5,\ldots,s-1\}  \mid  \zeta(i) \not\in \mathbb{Q} \right\}  \geqslant 1.192507 \cdot \sqrt{\frac{s}{\log s}}. \]

The simplest parameters to improve the above result are: $M=7$, $J=2$, $\delta_1=0$, $\delta_2 = 1$. For this collection of parameters, we have $\varpi = 2.284309\ldots$, $r_0 = 1.850170\ldots$ and 
\[ C_0 = 1.197980\ldots. \]

Take $M=57$, $J=18$, $(\delta_1,\delta_2,\ldots,\delta_{18}) = (1,1,1,1,1,2,2,3,3,5,5,6,6,7,8,9,10,12)$. Then $\varpi = 238.966249\ldots$, $r_0 = 1.573948\ldots$ and
\[ C_0 = 1.262672\ldots.  \]

\bigskip

\begin{proof}[Proof of Theorem \ref{main_thm}]
Take $M=563$, $J=76$ and $\delta_1,\delta_2,\ldots,\delta_{76}$ in Table \ref{table}.
\begin{table}[htbp]
	\centering
	\caption{The choice of $(\delta_1,\ldots,\delta_{76})$}
	\label{table}
	\begin{tabular}{|cccccccc|}
		\hline
		$\delta_1=1$ & $\delta_{11}=4$ & $\delta_{21}=14$ & $\delta_{31}=24$ & $\delta_{41}=44$ & $\delta_{51}=64$ & $\delta_{61}=84$ & $\delta_{71}=104$  \\
		$\delta_2=1$ & $\delta_{12}=5$ & $\delta_{22}=15$ &  $\delta_{32}=26$ & $\delta_{42}=46$ & $\delta_{52}=66$ & $\delta_{62}=86$ & $\delta_{72}=108$ \\
		$\delta_3=1$ & $\delta_{13}=6$ & $\delta_{23}=16$ & $\delta_{33}=28$ & $\delta_{43}=48$ & $\delta_{53}=68$ & $\delta_{63}=88$ & $\delta_{73}=112$ \\
		$\delta_4=1$ & $\delta_{14}=7$ & $\delta_{24}=17$ & $\delta_{34}=30$ & $\delta_{44}=50$ & $\delta_{54}=70$ & $\delta_{64}=90$ & $\delta_{74}=116$ \\
		$\delta_5=1$ & $\delta_{15}=8$ & $\delta_{25}=18$ & $\delta_{35}=32$ & $\delta_{45}=52$ & $\delta_{55}=72$ & $\delta_{65}=92$ & $\delta_{75}=120$ \\
		$\delta_6=2$ & $\delta_{16}=9$ & $\delta_{26}=19$ & $\delta_{36}=34$ & $\delta_{46}=54$ & $\delta_{56}=74$ & $\delta_{66}=94$ & $\delta_{76}=124$ \\
		$\delta_7=2$ & $\delta_{17}=10$ & $\delta_{27}=20$ & $\delta_{37}=36$ & $\delta_{47}=56$ & $\delta_{57}=76$ & $\delta_{67}=96$ &\\
		$\delta_8=3$ & $\delta_{18}=11$ & $\delta_{28}=21$ & $\delta_{38}=38$ & $\delta_{48}=58$ & $\delta_{58}=78$ & $\delta_{68}=98$ &\\
		$\delta_9=3$ & $\delta_{19}=12$ & $\delta_{29}=22$ & $\delta_{39}=40$ & $\delta_{49}=60$ & $\delta_{59}=80$ & $\delta_{69}=100$ &\\
		$\delta_{10}=4$ & $\delta_{20}=13$ & $\delta_{30}=23$ & $\delta_{40}=42$ & $\delta_{50}=62$ & $\delta_{60}=82$ & $\delta_{70}=102$ &\\
		\hline
	\end{tabular}
\end{table}

Then $\varpi = 12694.987927\ldots$, $r_0 = 1.502726\ldots$ and
\[ C_0 = 1.284579\ldots. \]
By Theorem \ref{final_lemma}, we have
\[ \# \left\{  i \in \{3,5,\ldots,s-1\}  \mid  \zeta(i) \not\in \mathbb{Q} \right\}  \geqslant 1.284579 \cdot \sqrt{\frac{s}{\log s}} \]
for any sufficiently large even integer $s$.
\end{proof}

\begin{remark}
The same $\delta_j$'s in Table \ref{table} were used in \cite{Lai2024+}. But the parameter $M=563$ in this note is different from that in \cite{Lai2024+}. Indeed, if we fix $J=76$ and these $\delta_j$'s in Table \ref{table}, and let $M$ vary from $500$ to $600$, then $M=563$ gives the best $C_0$. 
\end{remark}

\vspace*{3mm}
\begin{flushright}
\begin{minipage}{148mm}\sc\footnotesize
L.\,L., Beijing International Center for Mathematical Research, Peking University, Beijing, China\\
{\it E--mail address}: {\tt lilaimath@gmail.com} \vspace*{3mm}
\end{minipage}
\end{flushright}

\end{document}